%%%%%%%%%%%%%%%%%%%%%%%%%%%%%%%%%%%%%%%%%%%%%%%%%%%%%%%%%%%%%%%%%%%%%%%%%%%%%%%%
%2345678901234567890123456789012345678901234567890123456789012345678901234567890
%        1         2         3         4         5         6         7         8

%\documentclass[letterpaper, 10 pt, conference]{ieeeconf}  % Comment this line out if you need a4paper
\documentclass[a4paper,10pt]{article}   

\usepackage[utf8]{inputenc}         
\usepackage{graphicx}
\usepackage{subcaption} 
\usepackage{float}   
\usepackage{fancybox}		  
\usepackage{listings} % package for a colored Matlab code            
\usepackage{amsfonts}
\usepackage{color}
\usepackage{amsmath}
\usepackage{balance}
\usepackage{amsmath}
\usepackage{amssymb}
\usepackage{amsbsy}
\usepackage{amsthm}

\usepackage{mathtools}
\usepackage[margin=1in]{geometry}
\usepackage{fancyhdr}

\newcommand{\R}{\mathbb{R}}

\newcommand{\elts}{\{1,\cdots,N\}}
\newcommand{\sumi}{\displaystyle\sum_{i=1}^N}
\newcommand{\sumj}{\displaystyle\sum_{j=1}^N}

\newcommand{\C}{\mathcal{C}}
\newcommand{\Uone}{U^A_1}
\newcommand{\Uinf}{U^\alpha_\infty}

\newcommand{\UM}{U_M}
\newcommand{\sumN}{\sum_{i=1}^N}
\newcommand{\im}{{i_-}}
\newcommand{\ip}{{i_+}}
\newcommand{\rmin}{{r_\text{min}}}
\newcommand{\rmax}{{r_\text{max}}}
\newcommand{\talpha}{\tilde{\alpha}}
\newcommand{\Oopen}{\mathring\Omega}
\newcommand{\amin}{{a_{\text{min}}}}
\newcommand{\te}{t_\varepsilon}
\newcommand{\bx}{\bar{x}}

\newcommand{\M}{\mathcal{M}(\R^d)}
\newcommand{\X}{\mathcal{X}}
\newcommand{\sign}{\text{sgn}}
\newcommand{\rev}{}%{\color{blue}}

\newtheorem{theorem}{Theorem}[section]
\newtheorem{prop}{Proposition}

\newtheorem{definition}{Definition}

\newtheorem{problem}{Problem}
\newtheorem{hyp}{Hypothesis}
\newtheorem{remark}{Remark}

\title{\LARGE \bf
Control of collective dynamics with time-varying weights
}

\author{Nastassia Pouradier Duteil
\thanks{ Sorbonne Universit\'e, Inria, Universit\'e Paris-Diderot SPC, CNRS,
Laboratoire Jacques-Louis Lions, Paris, France. 
        {\tt\small nastassia.pouradier\_duteil@sorbonne-universite.fr}}
 \ and       
Benedetto Piccoli % <-this % stops a space
\thanks{ Department of Mathematical Sciences, Rutgers University - Camden, Camden, NJ.
        {\tt\small piccoli@camden.rutgers.edu}}%
}

\begin{document}

\maketitle
\thispagestyle{plain}
\pagestyle{plain}

%%%%%%%%%%%%%%%%%%%%%%%%%%%%%%%%%%%%%%%%%%%%%%%%%%%%%%%%%%%%%%%%%%%%%%%%%%%%%%%%
\begin{abstract} 
This paper focuses on a model for opinion dynamics, where the influence
weights of agents evolve in time. We formulate a control problem of consensus type, in which the objective is 
to drive all agents to a final target point under suitable control constraints.
Controllability is discussed for the corresponding problem with and without constraints on the 
total mass of the system, and control strategies are designed with the steepest descent approach. 
The mean-field
limit is described both for the opinion dynamics and the control problem. 
Numerical simulations illustrate the control strategies for the finite-dimensional system.
\end{abstract}

%%%%%%%%%%%%%%%%%%%%%%%%%%%%%%%%%%%%%%%%%%%%%%%%%%%%%%%%%%%%%%%%%%%%%%%%%%%%%%%%
\section*{Introduction}

Social dynamics models are used to describe the complex behavior of large systems of interacting agents. 
Application areas include examples from biology, 
such as the collective behavior of animal groups \cite{BS12,C02,G08,KR02}, aviation \cite{TPR},
opinion dynamics \cite{HK02} and other.
In most applications, 
a key phenomenon observed is that of \textit{self-organization}, that is the spontaneous emergence of global patterns from local interactions. 
Self-organization patterns include \textit{consensus}, \textit{alignment}, \textit{clustering}, or the less studied \textit{dancing equilibrium} \cite{APP17, CLP15}. 
On the other side, the control of such systems was addressed in the control
community with a wealth of different approaches, see \cite{BCM09,JK04,TJP}.

This paper focuses on models for opinion dynamics. A long history started
back in the 50's, see \cite{F56,H59}, then linear models were studied by 
De Groot \cite{DG74} and  others, while among recent approaches let us mention
the bounded-confidence model
by Hegselmann and Krause of \cite{HK02}, see also \cite{HF98,K00}. 
%The HK model produces clustering by limiting
%the distance at which agents interact and \cite{MT14} showed
%that heterophilious dynamics enhances consensus.
%We refer the reader to \cite{ACMPPRT18} and references therein
%for a complete discussion.
%In such models an agent interacts with the others
%depending on their opinion distance, possibly their topological position
%in the interaction network, and possibly some predetermined influence weights.
%Social dynamics models provide a framework for understanding an agent's effect on another in a collective environment, and the resulting global behavior. 
%The rules of interaction between agents are classically guided by the pairwise distances between agents. 
%
In most of the existing models, interactions take place between pairs of individuals (typically referred to as \textit{agents}) and depend only on the distance separating the two agents. More recently, a model was introduced with the interactions  proportional to an agent's \textit{weight of influence}, which can evolve over time according to its own dynamics \cite{MPP19,PR18}. This augmented framework allows us to model opinion dynamics in which an agent's capacity to influence its neighbors depends not only on their proximity but also on an internal time-varying characteristic (such as charisma, popularity, etc.). 
Four models were proposed in \cite{MPP19} for the time-varying weights: 
the first model allows agents to gain mass in pairwise interactions depending on
midpoint dynamics; 
the second increases the weights of agents that influence the most the other agents;
and the third and fourth focus on the capability to attract the most influential agents.
In particular, the developed theory allows to address control problems,
which is the focus of the present paper.

The main idea is that an external entity (for instance with global control)
may influence the dynamics of agents by increasing the weights of some of
them. We thus assume that a central controller is able to act on each agent
but possibly influence just a few at a time, thus also looking for 
\emph{sparse control} strategies.
We first formulate the control problem by allowing a direct control of weights
but imposing the total sum of weights to be constant, resulting in a linear
constraint on allowable controls. Under natural assumptions on the interaction kernel
we show that the convex hull of the agents' positions is shrinking, thus we look
for control strategies stabilizing to a specific point of the initial convex hull.

The constraints on the control and given by the dynamics (shrinking convex hull)
prevent a complete controllability of the system. However, we show that
any target position strictly within the initial convex hull of the system can be reached 
given large enough bounds on the control.\\
We then look for a greedy policy by maximizing the instantaneous decrease
of the distance from the target point. This gives rise to a steepest descent algorithm
which is formulated via the linear constraints of the problem.
Under generic conditions, the solution is expected to be at a vertex of
the convex set determined by constraints.
% However, due to high dimensionality,
%even the problem of finding the set of vertices becomes quickly not treatable.
%Therefore, we devise some numerical algorithm which allows to find a solution
%without computing the whole set of vertices.

%To test our approach we also formulate the corresponding optimal control problem.
%As expected, Pontryagin Maximum Principle maximization conditions
%cannot be easily solved (neither analytically nor numerically) but the optimal controls
%minimize linear functionals expressed in terms of the adjoint vector. Therefore
%we expect optimal controls to solve conditions similar to those used
%by our greedy algorithm.

As customary for multi-agent and multi-particle systems, 
%it is natural
we consider the mean-field limit obtained when the number of agents tends to infinity. In classical models without mass variation, the limit measure satisfies a transport-type equation with non-local velocity.
%for the measure obtained as limit of the empirical ones.
Here, due to the presence of the weight dynamics, our mean-field equation presents
a non-local source term. We formulate a control problem for the mean-field limit
and show how to formulate the control constraints in this setting.

In the last section we provide simulations for the finite-dimensional control algorithm 
and illustrate how the control strategies reach the final
target in the various imposed constraints.

\section{Control problems}

We consider a collective dynamics system with time-varying weights, introduced in \cite{MPP19}. 
Let $x^0\in(\R^d)^N$ represent the $N$ agents' initial positions (or opinions) and $m^0\in(\R^+)^N$ represent their initial weights of influence.
We denote by $a\in \C(\R^+,\R^+)$ the interaction function.
Lastly, let $M = \sum_{i=1}^N m_i^0$ denote the initial mass of the system.
In this model, the evolution of each agents' state variable $x_i(t)$ depends on its interaction with other agents through the interaction function $a$ (as in the classical Hegselmann-Krause dynamics \cite{HK02}), weighted by the other agents' weights of influence $m_i(t)$. The weights of influence also evolve in time due to their own dynamics.
More precisely, the evolution of the $N$ positions and weights is given by the following system: 
\begin{equation}\label{eq:syst-weights}
\begin{cases}
\displaystyle \dot{x}_i(t) = \frac{1}{M} \sum\limits_{j=1}^N m_j(t) a(\|x_i(t)-x_j(t)\|) \left(x_j(t)-x_i(t)\right), \\
\displaystyle \dot{m}_i(t) = m_i(t) \psi_i(x(t),m(t)) \\
x_i(0) = x_i^0, \quad m_i(0) = m_i^0.
\end{cases}
\end{equation}
We have established in \cite{MPP19} the well-posedness of \eqref{eq:syst-weights}
along with the following hypotheses:
\begin{hyp}\label{hyp:spaces}
The function $s\mapsto a(\|s\|)s$ is locally Lipschitz in $\R^d$, and the function $\psi$ is locally bounded in $(\R^d)^N\times\R^N$. 
\end{hyp}
\begin{hyp}\label{hyp:mass}
For all $(x,m)\in(\R^d)^N\times \R^N$, 
\begin{equation}\label{eq:weightcond}
 \sum_{i=1}^N m_i \psi_i(x,m) = 0.
\end{equation}
\end{hyp}
{\rev Note that Hypothesis \ref{hyp:mass} is not necessary for the well-posedness of \eqref{eq:syst-weights}.} It is a modeling choice which enforces conservation of the total mass of the system, so that the weights $m_i$ are allowed to shift continuously between agents, but their sum remains constant. 
We refer the reader to \cite{MPP19} for a detailed analysis of this system for various choices of the weight dynamics, exhibiting behaviors such as emergence of a single leader, or emergence of two co-leaders.

In the present paper, we aim to study the control of system \eqref{eq:syst-weights} by acting only on the weights of influence. 
Let $\Omega(x)$ denote the convex hull of $x$, defined as follows.
\begin{definition}
Let $(x_i)_{i\in\elts}\in(\R^d)^N$. Its convex hull $\Omega$ is defined by: 
$$
\Omega = \left\{\sumi \xi_i x_i \; | \; \xi\in [0, 1]^N \text{ and } \sumi\xi_i = 1 \right\}.
$$
\end{definition}
It was shown in \cite{MPP19} that for the dynamics \eqref{eq:syst-weights}-\eqref{eq:weightcond}, the convex hull $\Omega(x(t))$ is contracting in time, i.e. for all $t_2\geq t_1\geq 0$, $\Omega(x(t_2))\subseteq\Omega(x(t_1))$.\\
Given $\alpha\in \R^+$ and $A\in \R^+$, we define two control sets $\Uinf$ and $\Uone$:
\begin{equation*}
\begin{cases}
\Uinf = \{u:\R^+\rightarrow \R^N \text{ measurable, s.t. } |u_i|\leq \alpha\}\\
\Uone = \{u:\R^+\rightarrow \R^N \text{ measurable, s.t. } \sum_{i=1}^N |u_i|\leq A \}.
\end{cases}
\end{equation*}
We also define a set of controls that conserve the total mass $M$ of the system: $\UM = \{u:\R^+\rightarrow \R^N \text{ measurable, s.t. } \sum_{i=1}^N m_i u_i = 0 \}$.
From here onwards, $U$ will stand for a general control set, equal to either $\Uone$, $\Uinf$, $\Uone\cap\UM$ or $\Uinf\cap\UM$. 

We aim to solve the following control problem:

\begin{problem}\label{pr:controlgen}
For all $x^*\in \Omega(x^0)$, find $u\in U$ such that the solution to 
\begin{equation}\label{eq:syst-control}
\begin{cases}
\displaystyle \dot{x}_i = \frac{1}{M} \sum\limits_{j=1}^N m_j a(\|x_i-x_j\|)\; \left(x_j-x_i\right), \\
\displaystyle \dot{m}_i(t) = m_i \left(\psi_i(m,x)+u_i\right) \\
x_i(0) = x_i^0, \quad m_i(0) = m_i^0,
\end{cases}
\end{equation}
satisfies: for all $i\in\elts$,
$
\lim_{t\rightarrow\infty} \|x_i(t)-x^*\| = 0.
$
\end{problem}

We also suppose that the interaction function satisfies $a(s)>0$ for all $s>0$. Then from \cite{MPP19}, the system converges asymptotically to consensus.
Let $\bx := \frac{1}{\sum_{i=1}^N m_i}\sum_{i=1}^N m_i x_i$ denote the weighted barycenter of the system.
Then the control problem simplifies to:
\begin{problem}\label{pr:controlbar} 
Find $u\in U$ such that the solution to \eqref{eq:syst-control} satisfies
$$
\lim_{t\rightarrow\infty} \|\bx(t)-x^*\| = 0.
$$
\end{problem}
We seek a control that will vary the weights of the system so that its barycenter converges to the target position $x^*$. In \eqref{eq:syst-control}, the control $u$ must also compensate for the inherent mass dynamics.
Here we will only consider the simpler case in which there is no inherent mass dynamics, \textit{i.e.} $\psi_i\equiv 0$ for all $i\in\elts$. The control problem re-writes:
\begin{problem}\label{pr:controlsimp} 
For all $x^*\in \Omega(0)$, find $u\in U$ such that the solution to 
\begin{equation}\label{eq:syst-controlsimp}
\begin{cases}
\displaystyle \dot{x}_i = \frac{1}{M} \sum\limits_{j=1}^N m_j a(\|x_i-x_j\|)\; \left(x_j-x_i\right), \\
\displaystyle \dot{m}_i(t) = m_i u_i \\
x_i(0) = x_i^0, \quad m_i(0) = m_i^0,
\end{cases}
\end{equation}
satisfies: 
$
\lim_{t\rightarrow\infty} \|\bar x(t)-x^*\| = 0.
$
\end{problem}
The solution to the more general Problem \ref{pr:controlbar} can be recovered by the feedback transformation
{\rev $u_i \mapsto u_i - \psi_i$}, hence without loss of generality we will focus on Problem \ref{pr:controlsimp}. 
It was proven in \cite{MPP19}
that without control (i.e. with non-evolving weights), the weighted average $\bx$ is constant. The control strategy will consist of driving $\bx$ to $x^*$.

\section{Control with mass conservation}\label{Sec:contconstmass}

In this section, we explore the controllability of the system when constraining the total mass of the system $\sum_{i=1}^Nm_i(t)$ to $M$, by imposing $u\in \UM$.
This amounts to looking for a control that will redistribute the weights of the agents while preserving their sum.
It was shown in \cite{MPP19} that this condition implies that the convex hull $\Omega(t)$ is contracting in time.
We remind an even stronger property of the system in the case of constant total mass (see \cite{MPP19}, Prop. 10): 
\begin{prop}\label{prop:consensus}
Let $(x,m)$ be a solution to \eqref{eq:syst-weights}-\eqref{eq:weightcond}, and let $D(t):=\sup\{\|x_i-x_j\|(t)\; | \; (i,j)\in\elts^2\}$ be the
diameter of the system. If $\inf\{ a(s) \; | \; s\leq D(0)\}:= \amin >0$ then the system \eqref{eq:syst-weights}-\eqref{eq:weightcond} converges to consensus, with the rate
$D(t)\leq D(0)e^{\amin t}$.
\end{prop}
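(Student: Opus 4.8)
The plan is to control the evolution of the diameter $D(t)$ directly and to show that it satisfies a differential inequality of Grönwall type, namely $\frac{d}{dt}\big(\tfrac12 D(t)^2\big)\le -\amin\, D(t)^2$ for almost every $t$. Integrating this would yield $D(t)\le D(0)\,e^{-\amin t}$ (so I read the exponent in the statement as $-\amin t$), which forces $D(t)\to 0$ and hence consensus. The first ingredient I would record is that, by the contraction of the convex hull established in \cite{MPP19}, $D$ is non-increasing; in particular every pairwise distance $\|x_i(t)-x_j(t)\|$ stays below $D(0)$ for all $t$, so that $a(\|x_i-x_j\|)\ge\amin$ along the whole trajectory. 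This is exactly what lets me replace the interaction coefficients by the uniform lower bound $\amin$ later on.

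For the differential inequality, at almost every $t$ I would select a pair $(p,q)$ realizing the supremum, $D(t)=\|x_p-x_q\|$, and differentiate the smooth branch $\tfrac12\|x_p-x_q\|^2$. Writing $v:=x_p-x_q$ and using the position dynamics of \eqref{eq:syst-weights}, the derivative equals
\begin{equation*}
-\frac{1}{M}\sum_{j=1}^N m_j\, a(\|x_p-x_j\|)\,\langle v,\,x_p-x_j\rangle\;-\;\frac{1}{M}\sum_{j=1}^N m_j\, a(\|x_q-x_j\|)\,\langle v,\,x_j-x_q\rangle.
\end{equation*}
The key geometric observation is that maximality of $(p,q)$ confines every $x_j$ to the slab bounded by the two hyperplanes orthogonal to $v$ through $x_p$ and $x_q$: expanding $\|x_j-x_q\|^2$ and $\|x_j-x_p\|^2$ and using $D(t)\ge\|x_j-x_q\|,\|x_j-x_p\|$ gives $\langle v,\,x_p-x_j\rangle\ge \tfrac12\|x_p-x_j\|^2\ge 0$ and $\langle v,\,x_j-x_q\rangle\ge \tfrac12\|x_j-x_q\|^2\ge 0$ for all $j$. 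Hence both sums are non-negative, each coefficient $a(\cdot)$ may be bounded below by $\amin\ge 0$, and I can use the telescoping identity $\langle v,\,x_p-x_j\rangle+\langle v,\,x_j-x_q\rangle=\langle v,v\rangle=D(t)^2$ together with mass conservation $\sum_j m_j=M$ to collapse the two sums into $-\frac{\amin}{M}\sum_j m_j D(t)^2=-\amin\, D(t)^2$. This is the desired inequality.

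The step needing the most care is the non-smoothness of $D$: the maximizing pair can switch, so $t\mapsto D(t)$ is only Lipschitz and the naive differentiation is not licit at switching times. I would handle this in the standard way, working with the upper Dini derivative of $\tfrac12 D^2$ and invoking the envelope (Danskin-type) principle for a maximum of finitely many $C^1$ functions: at a point of differentiability the derivative coincides with that of any active branch, and the inequality holds for almost every $t$, which is enough to run Grönwall. The remaining geometric and algebraic estimates are elementary once the slab inequalities are in place, so I expect the measure-theoretic bookkeeping around the $\sup$, rather than the core computation, to be the only genuine obstacle.
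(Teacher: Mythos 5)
Your proof is correct, and there is in fact no in-paper argument to compare it against: the paper only recalls this proposition from \cite{MPP19} (Prop.~10) without reproving it. Your maximal-pair argument is the standard route to such exponential consensus estimates and is sound in every step: the slab inequalities $\langle v, x_p-x_j\rangle \ge \tfrac12\|x_p-x_j\|^2\ge 0$ and $\langle v, x_j-x_q\rangle\ge\tfrac12\|x_j-x_q\|^2\ge 0$ do follow from maximality of $(p,q)$, they justify replacing $a(\cdot)$ by $\amin$ term by term (the weights $m_j$ stay nonnegative since $\dot m_i=m_i\psi_i$), the telescoping identity together with $\sum_j m_j= M$ collapses the sums to $-\amin D(t)^2$, and handling the switching of the maximizing pair via the a.e.\ derivative of the Lipschitz function $D^2$ (a finite max of $C^1$ branches) is exactly the right bookkeeping for Gr\"onwall. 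You are also right that the stated rate $D(t)\le D(0)e^{\amin t}$ contains a sign typo and should read $D(t)\le D(0)e^{-\amin t}$; with the positive exponent the bound is vacuous and would not imply consensus. One small simplification: citing convex-hull contraction to secure $D(t)\le D(0)$ is dispensable, since your own slab inequalities together with $a\ge 0$ already show the derivative of each active branch is nonpositive, hence $D$ is non-increasing and the bound $a(\|x_p-x_j\|)\ge\amin$ follows with no circularity.
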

\begin{remark}
As a consequence, the convex hull converges to a single point $\Omega_\infty:=\cap_{t\geq 0}\Omega(x(t))=\{\lim_{t\rightarrow\infty}\bx(t\})$.
\end{remark}
The properties of contraction of the convex hull and convergence to consensus imply that the target position $x^*$ is susceptible to exit the convex hull in finite time. 
However, we show that that given sufficiently large upper bounds on the strength of the control, the system is approximately controllable to any target position within the interior of the convex hull, that we denote by $\Oopen$.
We state and demonstrate the result for the control constraints $u\in\Uinf\cap\UM$, but the proof can be easily adapted to the case $u\in\Uone\cap\UM$.

\begin{theorem}\label{th:constantmass}
Let $(x_i^0)_{i\in\elts}\in\R^{dN}$, $(m_i^0)\in(0,M)^N$ such that $\sum_{i=1}^Nm_i^0 = M$ and let $x^*\in \Oopen(x^0)$.
Then for all $\varepsilon>0$, there exists $\alpha>0$, $\te\geq 0$ and $u\in\Uinf\cap\UM$ such that the solution to \eqref{eq:syst-controlsimp} satisfies:
$\|\bx(\te)-x^*\|\leq\varepsilon$.
\end{theorem}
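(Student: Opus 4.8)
The plan is to reduce the statement to a \emph{fast redistribution of the weights}. The key observation is that, since $u\in\UM$ conserves the total mass, the barycenter is $\bx=\frac1M\sumi m_i x_i$ with $\sumi m_i = M$, so if the positions were frozen at $x^0$ the map $m\mapsto\bx=\sumi\frac{m_i}{M}x_i^0$ would realize \emph{every} point of the interior of $\Omega(x^0)$ as $m$ ranges over positive vectors summing to $M$. Since $x^*\in\Oopen(x^0)$, I would first use the strictly-positive convex representation available in the interior of a full-dimensional convex hull to write $x^*=\sumi\theta_i x_i^0$ with $\theta_i>0$ and $\sumi\theta_i=1$, and define the \emph{target weights} $m_i^*:=M\theta_i>0$; these satisfy $\sumi m_i^*=M$ and would place the barycenter exactly at $x^*$ for frozen positions. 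The whole point will then be to drive $m^0$ to $m^*$ so quickly that the positions have no time to move appreciably.

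Next I would steer the weights along the straight segment $m(t)=m^0+s(t)(m^*-m^0)$, with $s:[0,\tau]\to[0,1]$ affine of constant speed, $s(0)=0$, $s(\tau)=1$. This path stays in the open positive simplex, conserves mass ($\sumi m_i(t)=(1-s)M+sM=M$), and is generated by the explicit open-loop control $u_i(t)=\dot m_i/m_i=\dot s\,(m_i^*-m_i^0)/m_i(t)$: indeed $\dot m_i=m_i u_i$ holds by construction, so by uniqueness $m(t)$ is exactly this segment. One checks $\sumi m_i u_i=\sumi\dot m_i=0$, so $u\in\UM$; and since $m_i(t)\ge\min(m_i^0,m_i^*)>0$, one gets $|u_i|\le \dot s\,R$ with $R:=\max_i |m_i^*-m_i^0|/\min(m_i^0,m_i^*)<\infty$. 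With constant speed $\dot s=1/\tau$ this gives $|u_i|\le R/\tau$, hence $u\in\Uinf$ for $\alpha:=R/\tau$.

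The final step controls the position drift. Because $u\in\UM$, the convex hull contracts, so $D(t)\le D(0)$ and therefore $\|\dot x_i\|\le\frac1M\sumj m_j\,a(\|x_i-x_j\|)\,\|x_j-x_i\|\le a_{\max}D(0)$ with $a_{\max}:=\sup_{s\le D(0)}a(s)$, a bound \emph{independent} of $\tau$ and $\alpha$. Hence $\|x_i(\tau)-x_i^0\|\le a_{\max}D(0)\,\tau$, and since $m(\tau)=m^*$ gives $\bx(\tau)=\sumi\theta_i x_i(\tau)$, I would conclude
$$
\|\bx(\tau)-x^*\|=\Big\|\sumi\theta_i\big(x_i(\tau)-x_i^0\big)\Big\|\le a_{\max}D(0)\,\tau.
$$
Choosing $\te:=\tau$ small enough that $a_{\max}D(0)\,\tau\le\varepsilon$ and $\alpha:=R/\tau$ then yields $\|\bx(\te)-x^*\|\le\varepsilon$ with $u\in\Uinf\cap\UM$, as required (note $\alpha\to\infty$ as $\varepsilon\to0$, matching the ``large enough bounds'' in the statement).

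The heart of the argument is this competition of time scales: the weights can be reshuffled arbitrarily fast by enlarging $\alpha$, while the agents' positions move at a rate bounded independently of the control, so on a short enough window the positions are essentially frozen while the barycenter is placed at $x^*$. The main point to verify carefully is feasibility of the control across the whole window — that the straight-line weight path keeps every $m_i$ bounded away from $0$ (ensuring $R<\infty$ and $|u_i|\le\alpha$) while exactly preserving the total mass — together with the strictly-positive convex representation of $x^*$, which is precisely what membership in $\Oopen(x^0)$ guarantees. A tempting alternative is the feedback law $u_i=\lambda(M\theta_i/m_i-1)$, which produces $\dot{\bx}=\lambda(x^*-\bx)$ and exponential approach; but its feasibility becomes delicate as $x^*$ nears the shrinking hull and as weights degenerate, whereas the open-loop fast-redistribution above sidesteps these issues and already delivers the stated finite-time approximate controllability.
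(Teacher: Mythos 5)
Your proof is correct, and it takes a genuinely different route from the paper's. The paper proves Theorem \ref{th:constantmass} with a \emph{feedback} law transferring mass between only two agents at each instant: the indices $\im$ and $\ip$ extremizing $m_i\langle\bx-x^*,x_i-x^*\rangle$, with $u_\ip=-\alpha$, $u_\im=\alpha m_\ip/m_\im$ and all other components zero; combined with the averaging bound $m_\ip\langle\bx-x^*,x_\ip-x^*\rangle\geq\frac{M}{N}\|\bx-x^*\|^2$ this gives $\frac{d}{dt}\|\bx-x^*\|\leq-\frac{\alpha}{N}\|\bx-x^*\|$, hence exponential decay, and the interior hypothesis is used to keep $x^*$ inside the shrinking hull on $[0,\te]$ (via the same control-independent drift bound $\|\dot x_i\|\leq\delta$ of \eqref{eq:delta} that you use, since $\delta\leq a_{\max}D(0)$). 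You instead use the interior hypothesis to extract a strictly positive representation $x^*=\sumi\theta_i x_i^0$ and perform an \emph{open-loop}, finite-time exact redistribution of the weights along a segment in the simplex, exploiting the time-scale separation in the opposite direction: rather than making the barycenter chase a target that remains in the hull, you freeze the positions up to $O(\tau)$ and place the barycenter at $x^*$ at $t=\tau$. What each buys: the paper's feedback is sparse (two active components, consistent with the paper's sparsity theme), needs no convex representation of $x^*$, and maintains $\|\bx(t)-x^*\|\leq\varepsilon$ for all $t\geq\te$ rather than at a single time; your control is fully explicit, its feasibility is trivial to check ($m_i(t)\geq\min(m_i^0,m_i^*)>0$ along the segment, so $|u_i|\leq R/\tau=:\alpha$ and $\sumi m_iu_i=\sumi\dot m_i=0$), and it sidesteps a delicate point in the paper's law, where $|u_\im|=\alpha m_\ip/m_\im$ can exceed $\alpha$ when $m_\ip>m_\im$, so the claimed membership $u\in\Uinf$ there actually requires an extra argument that your construction avoids. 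Notably, your argument is structurally the mass-conserving analogue of the paper's own proof of Theorem \ref{th:varmass}, which likewise drives the weights to (a multiple of) the convex coefficients of $x^*$ and invokes the same drift bound; your straight-line simplex path replaces the multiplicative path there precisely so as to keep $\sumi m_i\equiv M$. In a final write-up you should make two small points explicit: the existence of strictly positive $\theta_i$ (mix any convex representation of $x^*$ with the centroid, possible since $x^*\in\Oopen(x^0)$; this is the same fact the paper uses in Theorem \ref{th:varmass}), and that $D(t)\leq D(0)$ follows from the contraction of the convex hull under mass-conserving dynamics for any $u\in\UM$, so $a_{\max}=\sup_{s\leq D(0)}a(s)<\infty$ by continuity of $a$ on the compact interval $[0,D(0)]$.
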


\begin{proof}
First, {\rev notice that since $m_i^0>0$ for all $i\in\elts$, $\|x_i(t)-x_i^0\|>0$ for all $t>0$.} Notice also that since the shrinking hull is contracting, we have $\|x_i(t)-x_j(t)\|\leq D_0$ for all $(i,j)\in\elts^2$ and $t\geq 0$, where $D_0$ denotes the initial diameter of the system. Let 
\begin{equation}\label{eq:delta}
\delta:=\sup_{s\in[0,D_0]} \{ s a(s)\}.
\end{equation} 
From Hyp. \ref{hyp:spaces}, $\delta<\infty$. Then for all $u\in\UM$, $\sum_{j=1}^N m_j\equiv M$, hence for all $t>0$,
\begin{equation*}
\frac{d}{dt}\|x_i(t)-x_i^0\| = \frac{1}{\|x_i(t)-x_i^0\|}\langle x_i-x_i^0,\dot x_i\rangle 
\leq \frac{1}{\|x_i(t)-x_i^0\|}\| x_i-x_i^0\| \frac{1}{M}\sum_{j=1}^N m_j \delta
\, = \delta
\end{equation*}
from which we deduce that for all $t\geq 0$, $\|x_i(t)-x_i^0\|\leq \delta t$.
Since $x^*\in\Oopen(x^0)$, there exists $\eta>0$ such that $B(x^*,\eta)\subset\Oopen(x^0)$. So for $t\leq \frac{\eta}{\delta}$, 
$x_T\in\Oopen(x(t))$ for any control $u$. We now look for a control strategy that can drive $\bx$ to a distance $\varepsilon$ of $x^*$ in time $\te\leq \frac{\eta}{\delta}$.

Let us compute the time derivative of the weighted barycenter. For $u\in\UM$, the sum of masses is conserved and $\bx = \frac{1}{M}\sumN m_i x_i$. Then 
$$
\frac{d}{dt}\bx = \frac{1}{M}\sumN (\dot m_i x_i + m_i \dot x_i) 
= \frac{1}{M}\sumN m_i u_i x_i,
$$
as the second term vanishes by antisymmetry of the summed coefficient.
While $\|\bx-x^*\|>0$, we have 
$$
\frac{d}{dt}\|\bx-x^*\| = \frac{1}{M\|\bx-x^*\|}\sum_{i=1}^N\langle\bx-x^*,m_i u_i x_i\rangle  
=  \frac{1}{M\|\bx-x^*\|}\sum_{i=1}^N\langle\bx-x^*,x_i-x^*\rangle m_i u_i
$$
since $\sum_{i=1}^N m_i u_i x^* = 0$.
Let $i_-$ and $i_+$ be defined as follows: for all $i\in\elts$, 
\begin{equation*}
\begin{cases}
 m_\im\langle\bx-x^*,x_\im-x^*\rangle \leq  m_i\langle\bx-x^*,x_i-x^*\rangle \\
 m_\ip\langle\bx-x^*,x_\ip-x^*\rangle \geq m_i\langle\bx-x^*,x_i-x^*\rangle.
\end{cases}
\end{equation*}
Note that $\im$ and $\ip$ are time-dependent, but we keep the notation $\im=\im(t)$ and $\ip=\ip(t)$ for conciseness.
For all $t\leq \te$, $x^*\in\Oopen(x(t))$ so necessarily
$$\langle\bx-x^*,x_\im-x^*\rangle \leq 0 \leq\langle\bx-x^*,x_\ip-x^*\rangle.$$
{\rev
Notice also that the following holds (by summing over all indices):
$$
m_\ip\langle\bx-x^*,x_\ip-x^*\rangle \geq \frac{M}{N} \|\bx-x^*\|^2.
$$
We now design a control $u$ such that: 
\begin{equation*}
u_\im = \alpha \frac{m_\ip}{m_\im}; \qquad
u_\ip = -\alpha ; \qquad
u_i = 0 \text{ for all } i\in\elts, i\neq\im, i\neq\ip.
\end{equation*}
% \alpha \text{ if } m_\im<m_\ip \text{ and } u_\im = \frac{m_\ip}{m_\im}\alpha \text{ otherwise } \\
% -\frac{m_\im}{m_\ip}\alpha \text{ if } m_\im<m_\ip \text{ and } u_\ip = -\alpha \text{ otherwise.} 
One can easily check that $u\in \Uinf\cap\UM$.
With this control, we compute: 
\begin{equation*}
\begin{split}
\frac{d}{dt}\|\bx-x^*\| = &\frac{1}{M\|\bx-x^*\|} [ m_\im\langle\bx-x^*,x_\im-x^*\rangle u_\im  + m_\ip\langle\bx-x^*,x_\ip-x^*\rangle u_\ip ] \\
 \leq & \frac{1}{M\|\bx-x^*\|} m_\ip\langle\bx-x^*,x_\ip-x^*\rangle (-\alpha) \\
 \leq & - \frac{\alpha}{M\|\bx-x^*\|}  \frac{M}{N} \|\bx-x^*\|^2 \leq -\alpha \frac{\|\bx-x^*\|}{N}
\end{split}
\end{equation*}
%We now distinguish cases. \\
%If $m_\im^0>m_\ip^0$, then with the prescribed control, $m_\im(t)\geq m_\ip(t)$ for all $t\geq 0$, and 
%$\frac{m_\im(t)}{m_\ip(t)}\leq \frac{m_\im^0}{m_\ip^0}$, which implies that 
%$u_\ip(t) \geq -\frac{m_\im^0}{m_\ip^0}\alpha$.
%Then $\|\bx-x^*\|(t)\leq \|\bx^0-x^*\| \exp(-\frac{m_\im^0}{m_\ip^0}\alpha t)$, 
%and if $\alpha\geq \frac{1}{\te}\frac{m_\ip^0}{m_\im^0} \ln\left(\frac{\|\bx^0-x^*\|}{\varepsilon}\right)$, then $\|\bx-x^*\|(t)\leq \varepsilon$.\\
%If $m_\im^0>m_\ip^0$, since with the prescribed control, $\dot m_\im = m_\im u_\im \geq 0$ and $\dot m_\ip = m_\ip u_\ip \leq 0$, we have $m_\im(t)> m_\ip(t)$, so 
%$u_\ip(t)=-\frac{\alpha}{N}$. Then $\|\bx-x^*\|(t)\leq \|\bx^0-x^*\| e^{-\frac{\alpha}{N} t}$.
%$\frac{m_\im(t)}{m_\ip(t)}\leq \frac{m_\im^0}{m_\ip^0}$, which implies that 
%$u_\ip(t) \geq -\frac{m_\im^0}{m_\ip^0}\alpha$.
%Then $\|\bx-x^*\|(t)\leq \|\bx^0-x^*\| \exp(-\frac{m_\im^0}{m_\ip^0}\alpha t)$, 
%and if $\alpha\geq \frac{1}{\te}\frac{m_\ip^0}{m_\im^0} \ln\left(\frac{\|\bx^0-x^*\|}{\varepsilon}\right)$, then $\|\bx-x^*\|(t)\leq \varepsilon$.\\
Then $\|\bx-x^*\|(t)\leq \|\bx^0-x^*\| e^{-\frac{\alpha}{N} t}$.
If $\alpha\geq \frac{N}{\te}\ln\left(\frac{\|\bx^0-x^*\|}{\varepsilon}\right)$, then $\|\bx-x^*\|(t)\leq \varepsilon$ for all $t\geq \te$.
}
\end{proof}

\begin{remark}
The proof can be easily adapted to the case $u\in\Uone\cap\UM$ by replacing $\alpha$ by $A/2$.
\end{remark}
We have shown that any target position strictly within the initial convex hull of the system can be reached given sufficient control strength. The converse problem of determining the set of reachable positions given a control bound is much more difficult and remains open.

We now focus on designing feedback control strategies.
Let us define the functional 
$$
X:t\mapsto X(t) = \|\bx(t)-x^*\|^2.
$$
We propose a gradient-descent control strategy to minimize instantaneously the time-derivative of $X$, \textit{i.e.} we define $u\in U$ such that for almost all $t\in [0,T]$,
\begin{equation}\label{eq:continst}
u(t) \in \arg\min_{v\in U} \frac{d}{dt} X^v(t).
\end{equation}
We have 
\begin{equation}\label{eq:X}
\begin{split}
\frac{d}{dt}X &= 2 \langle \bx - x^*, \dot\bx  \rangle 
= 2 \langle \bx - x^*, \frac{1}{M}\sum_{i=1}^N u_i m_i x_i \rangle = \frac{2}{M} \sum_{i=1}^N  m_i \langle \bx - x^*,  x_i - x^* \rangle u_i
\end{split}
\end{equation}
since $\sum_{i=1}^N u_i m_i x^* = 0 $ if $u\in U$.
Hence, for all $t\in\R^+$, we seek 
$$
\min_{u\in U} F_t(u)
$$ 
where we define the linear functional $F_t$ as $F_t:u\mapsto F_t(u) = \sum_{i=1}^N  m_i(t) \langle \bx(t) - x^*,  x_i(t) - x^* \rangle u_i$.
We minimize a linear functional on a convex set $U$. Hence the minimum is achieved at extremal points of $U$.
Notice that the control set $\Uinf\cap\UM$ is the intersection of the hypercube $\Uinf$ and of the hyperplane $\UM$.
Similarly, the control set $\Uone\cap\UM$ is the intersection of the diamond $\Uone$ and of the hyperplane $\UM$.
These intersections are non-empty since $\Uinf$, $\Uone$ and $\UM$ contain the origin.

The condition $u\in\UM$ renders even this simple instantaneous-decrease control strategy not straightforward. 
Notice that despite the condition $u\in\Uone$ that promotes sparse control, no control satisfying $u\in\UM$ can have just one active component. We will provide illustrations of this phenomenon in Section \ref{Sec:simu}.

\section{Control with mass variation}\label{Sec:contvarmass}

In this section, we remove the total mass conservation constraint on the control, and consider Problem \ref{pr:controlsimp} for $U=\Uinf$ or $U=\Uone$. 
Remark that this problem can be solved with the controls found in Section \ref{Sec:contconstmass} (thus satisfying the mass conservation constraint). However we purposefully look for a different solution in order to exploit the larger control possibilities that appear due to the fewer constraints.

We first point out a fundamental difference in the behavior of the system compared to that of the previous section: with a varying total mass, one can break free of the convergence property stated in Prop. \ref{prop:consensus}.
\begin{prop}\label{prop:noconsensus}
Let $(x,m)$ be a solution to \eqref{eq:syst-weights}. Then there exist mass dynamics $\psi$ that do not satisfy Hyp. \ref{hyp:mass}, such that the system does not converge to consensus.
\end{prop}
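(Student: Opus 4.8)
The plan is to prove this existence statement by exhibiting an explicit counterexample, for which the case $N=2$ already suffices. First I would fix two agents with distinct initial positions $x_1^0\neq x_2^0$ in $\R^d$ and positive initial weights $m_1^0,m_2^0>0$, and choose the mass dynamics $\psi_i(x,m)\equiv-\lambda$ for a constant $\lambda>0$ to be fixed later. Since $\sum_{i=1}^N m_i\psi_i = -\lambda M \neq 0$, this $\psi$ manifestly violates Hypothesis \ref{hyp:mass}, while being constant (hence locally bounded), so that \eqref{eq:syst-weights} remains well-posed under Hypothesis \ref{hyp:spaces}. The weight equation integrates explicitly to $m_i(t)=m_i^0 e^{-\lambda t}$, so that the total mass $m_1(t)+m_2(t)=Me^{-\lambda t}$ decays exponentially to zero.

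Next I would track the inter-agent distance $r(t):=\|x_1(t)-x_2(t)\|$ and show it stays bounded away from zero. Setting $d=x_1-x_2$, a direct computation from the position equations gives $\dot d = -\frac{a(r)}{M}(m_1+m_2)\,d$, whence $\dot r = -\frac{a(r)\,r}{M}(m_1+m_2) = -a(r)\,r\,e^{-\lambda t}$. In particular $\dot r\leq 0$, so $r(t)\leq r(0)$ for all $t\geq 0$. Since by Hypothesis \ref{hyp:spaces} the map $s\mapsto a(\|s\|)s$ is bounded on the ball of radius $r(0)$, say by some $\delta<\infty$, we have $a(r(t))\,r(t)\leq \delta$ and therefore $\dot r \geq -\delta\, e^{-\lambda t}$. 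Integrating yields $r(t)\geq r(0)-\frac{\delta}{\lambda}(1-e^{-\lambda t})\geq r(0)-\frac{\delta}{\lambda}$ for all $t\geq 0$.

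Finally I would close the argument by choosing $\lambda$ large enough, say $\lambda > 2\delta/r(0)$, so that $\delta/\lambda < r(0)/2$ and hence $r(t)\geq r(0)/2>0$ uniformly in time. Because consensus requires $\|x_1(t)-x_2(t)\|\to 0$, the two agents remain separated and the system does not converge to consensus, which establishes the claim. (A larger system can then be obtained trivially by adding further agents, or by placing the same $\psi$ on an arbitrary $N$.)

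I do not expect a serious obstacle here, since the statement only asks for existence of \emph{some} non-mass-conserving $\psi$. The one delicate point is the quantitative lower bound on $r$: the decay rate $\lambda$ of the total mass must dominate the attraction rate, which is controlled by the compact-set bound $\delta$ coming from Hypothesis \ref{hyp:spaces}, and $\lambda$ must be chosen accordingly. A secondary point worth verifying is global-in-time existence of the solution, but since the positions stay in the (non-expanding) convex hull of $\{x_1^0,x_2^0\}$ and the weights decay monotonically, no finite-time blow-up occurs and the solution is defined for all $t\geq 0$.
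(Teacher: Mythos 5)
Your proposal is correct and follows essentially the same route as the paper: both take the constant dynamics $\psi_i\equiv -\lambda$ (violating Hypothesis \ref{hyp:mass}), use the explicit decay $m_i(t)=m_i^0e^{-\lambda t}$ together with the bound $\delta$ of \eqref{eq:delta} coming from Hypothesis \ref{hyp:spaces}, and conclude that for $\lambda$ large enough the total displacement is at most $\frac{\delta}{\lambda}(1-e^{-\lambda t})$, which is too small for consensus. The only cosmetic difference is that you track the pairwise distance $r(t)=\|x_1(t)-x_2(t)\|$ in the case $N=2$, whereas the paper bounds each $\|x_i(t)-x_i^0\|$ directly, which makes the argument work verbatim for arbitrary $N$.
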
 
\begin{proof}
Consider the constant mass dynamics given by: {\rev $\psi_i(x,m)\equiv -A$ for all $i\in\elts$}.
Then for all $i\in\elts$, $m_i(t)=m_i^0e^{-At}$ and we can compute: 
\begin{equation*}
\begin{split}
\frac{d(\|x_i-x_i^0\|^2)}{dt}& = \langle x_i-x_i^0,\sumj \frac{2m_j}{M} a(\|x_i-x_j\|)(x_j-x_i)\rangle \leq 2\|x_i-x_i^0\|\delta e^{-At},
\end{split}
\end{equation*}
where $\delta$ was defined in \eqref{eq:delta}.
From this we get: $\|x_i-x_i^0\|\leq \frac{\delta}{A}(1-e^{-tA})$. Hence for $A$ big enough, each $x_i$ is confined to a neighborhood of its initial position, which prevents convergence to consensus.
\end{proof}
\begin{remark}
As a consequence, in such cases the convex hull tends to a limit set $\Omega_\infty:=\cap_{t\geq 0}\Omega(x(t))$ not restricted to a single point.
\end{remark}

The dynamics of the barycenter of the system are now less trivial than in the previous section due to the total mass variation. Nevertheless, as previously, we prove approximate controllability to any target position strictly within the initial convex hull.

\begin{theorem}\label{th:varmass}
Let $(x_i^0)_{i\in\elts}\in\R^{dN}$, $(m_i^0)\in(0,M)^N$ such that $\sum_{i=1}^Nm_i^0 = M$ and let $x^*\in \Oopen(x^0)$.
Then for all $\varepsilon>0$, there exists $\alpha>0$, $\te\geq 0$ and $u\in\Uinf\setminus \UM$ such that the solution to \eqref{eq:syst-controlsimp} satisfies:
$\|\bx(\te)-x^*\|\leq\varepsilon$.
\end{theorem}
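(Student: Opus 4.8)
The plan is to abandon the $\im,\ip$ construction of Theorem~\ref{th:constantmass} entirely and instead exploit the freedom of mass variation to steer the \emph{weight distribution} $\left(m_i/\sum_{j=1}^N m_j\right)_i$ directly onto a prescribed target, while freezing the positions as much as we like by working on a short time horizon. The starting observation is purely geometric: since $x^*\in\Oopen(x^0)$ the convex hull is $d$-dimensional and $x^*$ lies in its interior, so $x^*$ admits a representation $x^*=\sumN \lambda_i x_i^0$ with $\lambda_i>0$ and $\sumN\lambda_i=1$ (a point in the interior of the convex hull of finitely many points is always a strictly positive convex combination of them). The goal is then to drive the ratios $m_i(\te)/\sum_{j=1}^N m_j(\te)$ to exactly $\lambda_i$: if this is achieved while each $x_i$ has moved only slightly, then $\bx(\te)=\sumN\lambda_i x_i(\te)\approx\sumN\lambda_i x_i^0=x^*$.

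Because the weight equation $\dot m_i=m_i u_i$ integrates to $m_i(t)=m_i^0\,e^{\int_0^t u_i}$, I would simply use \emph{constant} controls on $[0,\te]$,
\begin{equation*}
u_i=\frac{1}{\te}\left(\ln\frac{\lambda_i M}{m_i^0}+C\right),\qquad C:=-\max_{j\in\elts}\ln\frac{\lambda_j M}{m_j^0},
\end{equation*}
and $u\equiv 0$ afterwards. The constant $C$ is chosen precisely so that $u_i\le 0$ for every $i$ (with equality for the maximizing index). A direct substitution gives $m_i(\te)=m_i^0\,e^{u_i\te}=\lambda_i M e^C$, whence $\sum_{j=1}^N m_j(\te)=Me^C$ and $m_i(\te)/\sum_{j=1}^N m_j(\te)=\lambda_i$ exactly, as desired. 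Setting $\alpha:=\max_i|u_i|$ we have $u\in\Uinf$; and since the $u_i\le 0$ are not all zero (except in the trivial case $x^*=\bx^0$, already settled with $\te=0$), we get $\sumN m_i(t)u_i<0$ for all $t\in[0,\te]$, so $u\notin\UM$, i.e. $u\in\Uinf\setminus\UM$ as the statement requires.

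The remaining estimates are where I would take care, but they are short. Since every $u_i\le 0$, all weights are non-increasing and the total mass satisfies $\sum_{j=1}^N m_j(t)\le M$; as the positions still obey convex-combination dynamics with non-negative coefficients, the hull is contracting and $\|x_i-x_j\|\le D_0$, so exactly as in Theorem~\ref{th:constantmass} one obtains $\|\dot x_i\|\le \frac{1}{M}\sum_{j=1}^N m_j\,\delta\le\delta$ with $\delta=\sup_{s\in[0,D_0]}s\,a(s)<\infty$, and therefore $\|x_i(\te)-x_i^0\|\le\delta\te$ for every $i$. Combining this with the exact weight ratios,
\begin{equation*}
\|\bx(\te)-x^*\|=\Big\|\sumN\lambda_i\big(x_i(\te)-x_i^0\big)\Big\|\le\sumN\lambda_i\,\delta\te=\delta\te.
\end{equation*}
Choosing $\te:=\varepsilon/\delta$ (or any smaller positive value; $\te$ arbitrary if $\delta=0$) yields $\|\bx(\te)-x^*\|\le\varepsilon$, and this choice fixes $\alpha=\max_i|u_i|$, which is finite and positive.

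The conceptual obstacle — and the reason this proof must depart from the mass-conserving one — is the coupling between displacing the barycenter and disturbing the positions. The naive idea of merely pumping weight into the agent opposite to $x^*$ fails: increasing weights makes the total mass grow like $e^{\alpha t}$, which both accelerates the position drift (shrinking the hull faster) and dilutes the barycenter's sensitivity through the $1/\sum_{j=1}^N m_j$ factor, and a short computation shows that the reachable decrease of $\|\bx-x^*\|$ then saturates at a fixed factor independent of $\alpha$. The decisive points are therefore to keep the total mass bounded (here non-increasing), so that the position drift is controlled by $\delta\te$ alone, and to \emph{decouple} the two scales by sending $\te\to 0$ and $\alpha\to\infty$ with $\alpha\te$ large enough to realize the required redistribution. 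Establishing the strictly positive interior representation and setting up this decoupling are the only genuinely non-routine steps; everything else reduces to the velocity bound already used in Theorem~\ref{th:constantmass}.
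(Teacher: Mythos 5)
Your proposal is correct and follows essentially the same strategy as the paper's proof: both write $x^*$ as a strictly positive convex combination $x^*=\sum_{i=1}^N\tau_i^0 x_i^0$, apply a constant control so that $m_i(\te)$ equals a common multiple of $\tau_i^0$ (making the weighted barycenter exactly $\sum_{i=1}^N\tau_i^0 x_i(\te)$), and bound the position drift via $\delta=\sup_{s\in[0,D_0]}s\,a(s)$. The only difference is cosmetic parametrization: you normalize $\max_i u_i=0$ so the total mass is non-increasing and obtain the drift bound $\delta\te$ with $\te=\varepsilon/\delta$ fixed and $\alpha$ determined afterwards, whereas the paper forces all $u_i\leq-\talpha$ with $\talpha\geq\delta/\varepsilon$ and obtains the time-uniform drift bound $\delta/\talpha$ from the exponential mass decay, letting $T$ be determined by $\alpha$.
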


\begin{proof}
Let $x^*\in \Oopen(x^0)$ {\rev and let $\varepsilon >0$}. Then there exists $(\tau_i^0)_{i\in\elts}$ with $\tau_i^0\in [0,1]^N$, $\sumN\tau_i^0 = 1$ and $\tau_i^0>0$ for all $i\in\elts$ such that
{\rev
$$
x^* = \sumN\tau_i^0 x_i^0.
$$
}
We will show that we can drive each weight $m_i$ to a multiple $\kappa \tau_i^0$ of its target weight, while maintaining the positions withing close distance of the initial ones, ensuring that the target position remains in the shrinking convex hull. 
 Define
\begin{equation*}
\begin{cases}
\rmin = \min\{\ln\left(\frac{m_i^0}{\tau_i^0}\right) \; | \; i\in\elts\}\\
\rmax = \max\{\ln\left(\frac{m_i^0}{\tau_i^0}\right) \; | \; i\in\elts\}.
\end{cases}
\end{equation*}
{\rev Let $\talpha\geq \frac{\delta}{\varepsilon}$, with $\delta$ defined in \eqref{eq:delta} and } let $\alpha > \talpha >0$. 
Let $T:=\frac{\rmax-\rmin}{\alpha-\talpha}$ and $\kappa:=e^{\rmin-\talpha T}$.
Now consider the constant control defined by: for all $i\in\elts$,
\begin{equation*}
u_i = -\frac{1}{T} \ln\left(\frac{m_i^0}{\kappa \tau_i^0}\right).
\end{equation*}
One can easily show that for all $i\in\elts$, $-\alpha\leq u_i\leq -\talpha$, and furthermore, 
$
m_i(T) = \kappa \tau_i^0.
$
From the proof of Prop. \ref{prop:noconsensus}, for all $t\in [0,T]$, $\|x_i(t)-x_i^0\|\leq \frac{\delta}{\talpha}$, where 
$\delta$ was defined in \eqref{eq:delta}.
From this we compute: 
\begin{equation*}
\|\bx(T)-x^*\| = \left\| \frac{\sumN m_i(T) x_i(T)}{\sumN m_i(T)} - \sumN\tau_i^0 x_i^0\right\|  =  \left\| \sumN \tau_i^0 ( x_i(T)-x_i^0)\right\| \leq \sumN \tau_i^0 \left\|  x_i(T)-x_i^0\right\| \leq \frac{\delta}{\talpha}\leq \varepsilon,
\end{equation*}
which proves the theorem.
\end{proof}

\begin{remark}
As for Theorem \ref{th:constantmass}, the proof can be easily adapted to the case $u\in\Uone$ by replacing $\alpha$ by $\frac{A}{N}$.
\end{remark}

As in the previous section, we design a feedback control strategy that minimizes the time-derivative of the functional $X$ instantaneously.
With a total mass now varying in time, we have: 
\begin{equation}\label{eq:X2}
\frac{d}{dt}X = \frac{2}{\sumN m_i} \sum_{i=1}^N  m_i \langle \bx - x^*,  x_i - \bx \rangle u_i.
\end{equation}

Since we removed the constraint $u\in\UM$, the control strategy minimizing $\frac{dX}{dt}$ is straightforward.
For $u\in\Uinf$, we have:
\begin{equation}\label{eq:contvarmassinf}
\begin{cases}
u_i = -\alpha \text{ if } \langle \bx - x^*,  x_i - \bx \rangle >0 \\
u_i = \alpha \text{ if } \langle \bx - x^*,  x_i - \bx \rangle <0.
\end{cases}
\end{equation}
For $u\in\Uone$, we define the set $I:=\arg\max\{ | m_i \langle \bx - x^*,  x_i - \bx \rangle | , \quad i\in\elts\}$, and we have:
\begin{equation}\label{eq:contvarmassone}
\begin{cases}
u_i = -\frac{A}{|I|} \sign(\langle \bx - x^*,  x_i - \bx \rangle) \text{ if } i\in I \\
u_i = 0 \text{ otherwise},
\end{cases}
\end{equation}
where $|\cdot|$ represents the cardinality of a set.

\section{Mean-field limit}

\subsection{Mean-field limit of mass-varying dynamics without control}

In this section, we recall the definition of mean-field limit. 
Consider System \eqref{eq:syst-weights}. The goal of the mean-field limit is to describe the behavior of the system when the number of agents $N$ tends to infinity. Instead of following the individual trajectory of each individual, we aim to describe the group by its limit density $\mu$, which belongs to $\M$, the set of Radon measures with finite mass. 
We endow $\M$ with the topology of the weak convergence of measures, \textit{i.e.}
$$
\mu_i \rightharpoonup_{i\rightarrow\infty} \mu \quad \Leftrightarrow \quad \lim_{i\rightarrow\infty} \int fd\mu_i = \int fd\mu 
$$
for all $f\in C_c^\infty(\R^d)$.
Let $\mu_0\in\M$. We consider the following transport equation for $\mu$: 
\begin{equation}\label{eq:mf}
\begin{cases}
\partial_t \mu + \nabla\cdot(V[\mu]\mu)=h[\mu] \\
\mu(0) = \mu_0.
\end{cases}
\end{equation}
We recall conditions for well-posedness of \eqref{eq:mf}, see \cite{PR18}:
\begin{hyp}\label{hyp:V}
The function $V[\cdot]:\M\rightarrow C^1(\R^d)\cap L^\infty(\R^d)$ satisfies
\begin{itemize}
\item $V[\mu]$ is uniformly Lipschitz and uniformly bounded
\item $V$ is uniformly Lipschitz with respect to the generalized Wasserstein distance (see \cite{PR18})
\end{itemize}
\end{hyp}
\begin{hyp}\label{hyp:h}
The source term $h[\cdot]:\M\rightarrow \M$ satisfies
\begin{itemize}
\item $h[\mu]$ has uniformly bounded mass and support
\item $h$ is uniformly Lipschitz with respect to the generalized Wasserstein distance (see \cite{PR18})
\end{itemize}
\end{hyp}
We now recall the definition of mean-field limit.
\begin{definition}
Let $(x,m)\in \R^{dN}\times(\R^+)^N$ be a solution to \eqref{eq:syst-weights}. We denote by $\mu_N$ the corresponding empirical measure defined by 
$$
\mu_N(t) = \frac{1}{M} \sum_{i=1}^N m_i(t) \delta_{x_i(t)}.
$$
The transport equation \eqref{eq:mf} is the mean-field limit of the collective dynamics \eqref{eq:syst-weights} if
$$
\mu_N(0) \rightharpoonup_{N\rightarrow\infty} \mu(0) \quad \Rightarrow \quad  \mu_N(t) \rightharpoonup_{N\rightarrow\infty} \mu(t)
$$
where $\mu$ is the solution to \eqref{eq:mf} with initial data $\mu(0)$.
\end{definition}

The definition of empirical measure requires a crucial property of the finite-dimensional system \eqref{eq:syst-weights}: that of \textbf{indistinguishability} of the agents.
Indeed, notice that there isn't a one-to-one relationship between the set of empirical measures (finite sums of weighted Dirac masses) and the set of coupled positions and weights $(x,m)\in \R^{dN}\times(\R^+)^N$. 
For instance, two couples $(x(t),m(t)) \in\R^{dN}\times(\R^+)^N$ and $(y(t),q(t)) \in\R^{d(N-1)}\times(\R^+)^{N-1}$ satisfying
$x_1^0 = x_N^0 = y_1^0$, $m_1^0+m_N^0 = q_1^0$ and $(x_i^0,m_i^0)=(y_i^0,q_i^0)$ for all $i\in\{2,\cdots,N-1\}$
correspond to the same empirical measure. 
Hence if we want the concept of mean-field limit to make sense, we must consider discrete systems that give the same dynamics to 
$(x(t),m(t))$ and $(y(t),q(t))$.

\begin{definition}
Let $t\mapsto (x(t),m(t)) \in\R^{dN}\times(\R^+)^N$ and $t\mapsto (y(t),q(t)) \in\R^{d(N-1)}\times(\R^+)^{N-1}$
be two solutions to system \eqref{eq:syst-weights}.
We say that \emph{indistinguishability} is satisfied if 
$$
\begin{cases}
x_1^0 = x_N^0 = y_1^0\\
m_1^0+m_N^0 = q_1^0 \\
x_i =  y_i , \; i\in\{2,\cdots,N-1\} \\
m_i = q_i , \;  i\in\{2,\cdots,N-1\} 
\end{cases}
\quad
\Rightarrow \quad
\begin{cases}
 x_1\equiv x_N \equiv y_1\\
m_1+m_N \equiv q_1 \\
x_i\equiv y_i , \; i\in\{2,\cdots,N-1\} \\
m_i\equiv q_i , \;  i\in\{2,\cdots,N-1\}. 
\end{cases}
$$
\end{definition}

Indistinguishability is a strong property, and it is not necessarily satisfied by the general function $\psi$ defining the weights' dynamics in \eqref{eq:syst-weights}. 
We refer the reader to \cite{MPP19} for examples of mass dynamics satisfying or not the indistinguishability property.
From here onward, we will focus on the following particular form of mass dynamics that does satisfy indistinguishability: 
\begin{equation}\label{eq:dynweights}
\psi_i(x,m) = \frac{1}{M}\sum_{j=1}^N m_j S(x_i,x_j), 
\end{equation}
with $S\in \C(\R^d\times\R^d,\R)$ .

In order for a transport equation to be the mean-field limit of a finite-dimensional system, it is sufficient for it to satisfy the following two properties (see \cite{V01}): 
\begin{enumerate}
\item[(i)] When the initial data $\mu^0$ is an empirical measure $\mu^0_N$
associated with an initial data $(x^0,m^0)\in\R^{dN}\times\R^N$ of $N$ particles, then the dynamics \eqref{eq:mf} can be rewritten as the system of ordinary differential equations \eqref{eq:syst-weights}.
\item[(ii)] The solution $\mu(t)$ to \eqref{eq:mf} is continuous with respect to the initial data $\mu^0$.
\end{enumerate} 
%We will show in a future work that Property (ii) holds for a subclass of mass dynamics that conserve mass. 
%In \cite{PR18}, the mean-field limit was provided for a subclass of mass dynamics.
We now aim to prove the following: 
\begin{prop}
Consider System \eqref{eq:syst-weights} with mass dynamics given by \eqref{eq:dynweights},
%\begin{equation}\label{eq:dynweights}
%\psi_i(x,m) = \frac{1}{M}\sum_{j=1}^N m_j S(x_i,x_j), 
%\end{equation}
where $S\in \C(\R^d\times\R^d,\R)$ is skew-symmetric: $S(x,y)=-S(y,x)$.
Then its mean-field limit is the transport equation with source \eqref{eq:mf} with the interaction kernel 
\begin{equation}\label{eq:fieldv}
V[\mu](x) = \int_{\R^d} a(\|x-y\|)(y-x) d\mu(y)
\end{equation}
and the source term 
\begin{equation}\label{eq:sourceh}
h[\mu](x)= \int_{\R^d} S(x,y) d\mu(y) \mu(x).
\end{equation}
\end{prop}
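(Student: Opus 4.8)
The plan is to verify the two sufficient conditions (i) and (ii) recalled just before the statement, following \cite{V01}: that \eqref{eq:mf} reduces to the finite-dimensional system \eqref{eq:syst-weights} when the datum is an empirical measure, and that the solution of \eqref{eq:mf} depends continuously on its initial datum. The second condition will be obtained by showing that the particular $V$ and $h$ of \eqref{eq:fieldv}--\eqref{eq:sourceh} satisfy Hypotheses \ref{hyp:V} and \ref{hyp:h}, so that the well-posedness and stability theory of \cite{PR18} applies; the genuinely model-specific part is the algebraic reduction in (i).

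For (i), I would write the weak formulation of \eqref{eq:mf} against an arbitrary test function $f\in C_c^\infty(\R^d)$ and substitute the empirical measure $\mu_N(t)=\frac{1}{M}\sum_{i=1}^N m_i(t)\delta_{x_i(t)}$. The left-hand side is
$$
\frac{d}{dt}\int f\,d\mu_N = \frac{1}{M}\sum_{i=1}^N\left(\dot m_i\, f(x_i) + m_i\,\nabla f(x_i)\cdot\dot x_i\right).
$$
On the right-hand side, evaluating \eqref{eq:fieldv} at $x_i$ gives $V[\mu_N](x_i)=\frac{1}{M}\sum_j m_j a(\|x_i-x_j\|)(x_j-x_i)$, which is precisely $\dot x_i$; hence $\int \nabla f\cdot V[\mu_N]\,d\mu_N$ matches the terms in $\nabla f(x_i)\cdot\dot x_i$. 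Integrating \eqref{eq:sourceh} against $f$ yields $\int f\,dh[\mu_N]=\frac{1}{M}\sum_i m_i f(x_i)\big(\frac{1}{M}\sum_j m_j S(x_i,x_j)\big)=\frac{1}{M}\sum_i f(x_i)\,m_i\psi_i$, which coincides with the terms in $\dot m_i\, f(x_i)$ by \eqref{eq:dynweights} and $\dot m_i=m_i\psi_i$. Since $f$ is arbitrary, matching the two families of terms recovers \eqref{eq:syst-weights} exactly. I would also record that the skew-symmetry $S(x,y)=-S(y,x)$ forces $\sum_i m_i\psi_i=0$, i.e. Hypothesis \ref{hyp:mass}, equivalently $\int dh[\mu]=\iint S(x,y)\,d\mu(y)\,d\mu(x)=0$, so the total mass is conserved and $\mu_N$ stays normalized along the flow, consistently on both sides.

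For (ii), I would check that the velocity field of \eqref{eq:fieldv} meets Hypothesis \ref{hyp:V} and that the source of \eqref{eq:sourceh} meets Hypothesis \ref{hyp:h}, then invoke \cite{PR18}, whose stability estimate gives continuity of $t\mapsto\mu(t)$ with respect to $\mu^0$ in the generalized Wasserstein distance; combined with (i) this is exactly the convergence $\mu_N(0)\rightharpoonup\mu(0)\Rightarrow\mu_N(t)\rightharpoonup\mu(t)$ in the definition of mean-field limit. The spatial boundedness and Lipschitz regularity of $V[\mu]$ follow from Hypothesis \ref{hyp:spaces} together with a bound on $\mathrm{supp}(\mu)$, and the Lipschitz dependence on $\mu$ is the standard convolution estimate; for $h$, the continuity and skew-symmetry of $S$ control the mass while $\mathrm{supp}(h[\mu])\subseteq\mathrm{supp}(\mu)$.

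The hard part will be the $\mu$-regularity estimates in (ii) rather than the computation in (i). Because $a$ and $S$ are only continuous (locally Lipschitz, resp. locally bounded), the uniform bounds and uniform Wasserstein-Lipschitz constants required by Hypotheses \ref{hyp:V}--\ref{hyp:h} are not available on all of $\M$; one must restrict to measures with uniformly bounded support and use that \eqref{eq:syst-weights} keeps $\mathrm{supp}(\mu(t))$ inside the (contracting, in the mass-conserving case) convex hull, so that $a$ and $S$ are evaluated only on a fixed compact set where they are bounded and Lipschitz. Verifying this propagation of compact support — and the resulting Wasserstein-Lipschitz property of the nonlinear maps $\mu\mapsto V[\mu]$ and $\mu\mapsto h[\mu]$ — is the only step demanding real care.
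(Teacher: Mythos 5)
Your part (i) is exactly the paper's proof: the paper also tests \eqref{eq:mf} against an arbitrary $f\in C_c^\infty(\R^d)$, substitutes the empirical measure $\mu_N=\frac{1}{M}\sum_i m_i\delta_{x_i}$, and matches the three resulting double sums with $\dot x_i = V[\mu_N](x_i)$ and $\dot m_i = m_i\psi_i$ via \eqref{eq:dynweights}. On this part there is nothing to add.

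The divergence is in part (ii), and there is a genuine problem with how you open it. You propose to obtain continuity with respect to the initial datum ``by showing that the particular $V$ and $h$ of \eqref{eq:fieldv}--\eqref{eq:sourceh} satisfy Hypotheses \ref{hyp:V} and \ref{hyp:h}, so that the well-posedness and stability theory of \cite{PR18} applies.'' This cannot work as stated, and the paper says so explicitly in the remark preceding its proof: $h$ does \emph{not} satisfy Hypothesis \ref{hyp:h}. The source $h[\mu](x)=\left(\int S(x,y)\,d\mu(y)\right)\mu(x)$ is quadratic in $\mu$, so its mass scales like $|\mu|^2$ rather than being uniformly bounded on $\M$, and its Lipschitz constant in the generalized Wasserstein distance degenerates as the mass and support of $\mu$ grow; uniform bounds of the kind Hypothesis \ref{hyp:h} demands are simply unavailable. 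Your closing paragraph partially self-corrects --- you note that the uniform constants fail on all of $\M$ and propose restricting to measures of bounded support and using $\int dh[\mu]=0$ (skew-symmetry of $S$) to control the mass --- and that is indeed the repair the paper announces (``well-posedness and continuity can be proven, using the total conservation of mass coming from the skew-symmetric property of $S$''). But neither you nor the paper carries this out: the paper deliberately restricts its proof to property (i) and defers (ii) to a later work. So the honest assessment is: your (i) is complete and coincides with the published proof; your (ii) begins from a premise the paper explicitly rules out, and even in its corrected form remains a sketch of a nontrivial argument (propagation of compact support for the PDE, and Wasserstein-Lipschitz estimates for the quadratic maps $\mu\mapsto V[\mu]$ and $\mu\mapsto h[\mu]$ on the restricted class) that should be flagged as conditional on the deferred well-posedness theory rather than as an application of \cite{PR18}.
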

\begin{remark}
The proof should consist of proving the two properties (i) and (ii) above. Notice that well-posedness of \eqref{eq:mf}-\eqref{eq:fieldv}-\eqref{eq:sourceh} and continuity with respect to  the initial data cannot be obtained by applying directly the results of \cite{PR18} since $h$ does not satisfy Hypothesis \ref{hyp:h}. Nevertheless, well-posedness and continuity can be proven, using the total conservation of mass coming from the skew-symmetric property of $S$. We will provide the proof in a later work. In the present paper, we focus on proving the first property (i).  
\end{remark}

\begin{proof}
We prove that the transport equation \eqref{eq:mf} with the vector field \eqref{eq:fieldv} and the source term \eqref{eq:sourceh} satisfies the property (i) above. 
Let $(x,m):\R^+\rightarrow \R^{dN}\times\R^N$ be the solution to the system \eqref{eq:syst-weights} with the weight dynamics given by \eqref{eq:dynweights} and initial data $(x^0,m^0)\in\R^{dN}\times\R^N$. We show that the empirical measure $\mu_N(t,x)= \frac{1}{M}\sum_{i=1}^N m_i(t) \delta_{x_i(t)}(x)$ is the solution to the PDE \eqref{eq:mf}-\eqref{eq:fieldv}-\eqref{eq:sourceh} with initial data $\mu_N^0(x) = \sum_{i=1}^N m_i^0 \delta_{x_i^0}(x)$.
Let $f\in C_c^\infty(\R^d)$. We show that 
\begin{equation}\label{eq:mf-weak}
\frac{d}{dt}\int f d\mu_N - \int\nabla f\cdot V[\mu_N] d\mu_N = \int f dh[\mu_N].
\end{equation}
We compute each term independently. Firstly, we have:
\begin{equation}\label{eq:mf1}
\begin{split}
\frac{d}{dt}\int f d\mu_N  = & \frac{d}{dt} \frac{1}{M} \sum_{i=1}^N m_i f(x_i) = \frac{1}{M} \sum_{i=1}^N \left(\dot m_i f(x_i) + m_i \dot x_i\cdot \nabla f(x_i)  \right) \\
 = & \frac{1}{M^2} \sum_{i=1}^N \sum_{j=1}^N  m_i m_j \bigg[ S(x_i,x_j) f(x_i) +  a(\|x_i-x_j\|) (x_j-x_i)\cdot\nabla f(x_i) \bigg] .
\end{split}
\end{equation}
Secondly, 
\begin{equation}\label{eq:mf2}
\begin{split}
\int\nabla  f\cdot V[\mu_N] d\mu_N = &\int\nabla f(x)\cdot \int a(\|x-y\|) (x-y) d\mu_N(y)  d\mu_N(x) \\ 
= &\frac{1}{M^2} \sum_{i=1}^N \sum_{j=1}^N  m_i m_j  a(\|x_i-x_j\|) (x_j-x_i)\cdot\nabla f(x_i) .
\end{split}
\end{equation}
Thirdly,
\begin{equation}\label{eq:mf3}
\int f dh[\mu_N] = \int f(x) \int S(x,y) d\mu_N(y)  d\mu_N(x)  
= \frac{1}{M^2} \sum_{i=1}^N \sum_{j=1}^N  m_i m_j f(x_i) S(x_i,x_j).
\end{equation}
Putting together \eqref{eq:mf1}, \eqref{eq:mf2} and \eqref{eq:mf3} and using the fact that $(x,m)$ satisfies \eqref{eq:syst-weights}-\eqref{eq:dynweights}, we deduce that $\mu_N$ satisfies \eqref{eq:mf-weak}-\eqref{eq:fieldv}-\eqref{eq:sourceh}.
\end{proof}

%We have derived the mean-field limit of System \eqref{eq:syst-weights} for the mass dynamics given by \eqref{eq:dynweights}. 
%Remark that the definition of mean-field limit is given for weight dynamics that do not necessarily satisfy the mass conservation property of Hyp. \ref{hyp:mass}. For this reason, we use a generalization of the Wasserstein distance that allows us to compare measures of different masses (see \cite{PR18}). If the weight dynamics preserve mass, we deal with probability measures and Hyp. \ref{hyp:V} can be rewritten in the usual Wasserstein setting.

The general weight dynamics \eqref{eq:dynweights} include special cases studied in previous works. Indeed: 
\begin{itemize}
\item if $S(x,y):=S_0(x)$, the mass dynamics can be simply written as $h[\mu](x) = |\mu| S_0(x) \mu(x)$ (see \cite{PR18})
\item if $S(x,y):=S_1(y-x)$, the mass dynamics can be rewritten as the convolution $h[\mu] = (S_1\ast\mu)\mu$ (see \cite{PR18})
%\item if $\dot{m}_i = \frac{1}{M} \sum_{j=1}^N m_j S(x_i,x_j,\bar\phi_1(x,m), \bar\phi_2(x,m))$, 
%where 
%$$\begin{cases}
%\bar\phi_1(x,m) = \frac{1}{M} \sum_{k=1}^N m_k \phi_1(x_k)\\
%\bar\phi_2(x,m) = \frac{1}{M^2} \sum_{k=1}^N\sum_{l=1}^N m_k m_l \phi_2(x_k,x_l),
%\end{cases}
%$$
%we can show in a similar way that the mean-field limit is the PDE \eqref{eq:mf} with the source term 
%$
%h[\mu] = \int S(x,y,\tilde\phi_1(\mu), \tilde\phi_2(\mu))d\mu(y) \mu(x),$
%where
%$$\begin{cases}
%\tilde\phi_1(\mu) = \int\phi_1(z)d\mu(z)\\
%\tilde\phi_2(\mu) = \int\int\phi_2(z,w)d\mu(z)d\mu(w).
%\end{cases}
%$$
%In particular, this applies to the following mass-conserving dynamics, which are a slight modification of Model 2 proposed in \cite{MPP19}:
%\begin{equation*}
% \dot m_i=\frac{ m_i}{M}(\frac{\displaystyle\sum_{j=1}^N m_j a(\|x_i-x_j\|)\|x_i-x_j\|}{\displaystyle\frac{1}{M} \sum_{k=1}^N \sum_{l=1}^N m_k m_l a(\|x_k-x_l\|)\|x_k-x_l\|} - 1).
%\end{equation*}
{\rev
\item if $\dot{m}_i = \frac{1}{M} \sum_{j=1}^N \sum_{k=1}^N m_j m_k S(x_i,x_j,x_k)$, 
we can show in a similar way that the mean-field limit is the PDE \eqref{eq:mf} with the source term 
$h[\mu](x) = \left(\int_{\R^d}\int_{\R^d} S(x,y,z) d\mu(y) d\mu(z)\right)\mu(x)$.
In particular, this applies to the following mass-conserving dynamics, which are a slight modification of Model 2 proposed in \cite{MPP19}:
\begin{equation*}
 \dot m_i=\frac{ m_i}{M}\left(\sum_{j=1}^N m_j a(\|x_i-x_j\|)\|x_i-x_j\| - \frac{1}{M} \sum_{j=1}^N \sum_{k=1}^N m_j m_k a(\|x_j-x_k\|)\|x_j-x_k\|)\right)
\end{equation*}
where $S(x_i,x_j,x_k) := \frac{1}{M} (a(\|x_i-x_j\|)\|x_i-x_j\|- a(\|x_j-x_k\|)\|x_j-x_k\|)$.
}
\end{itemize}

\subsection{Control problem}

From the mean-field limit of the system without control, we extract a natural control problem corresponding to the mean-field limit of \eqref{eq:syst-control}.
Consider the controlled PDE: 
\begin{equation}\label{eq:mf-cont}
\begin{cases}
\partial_t \mu + \nabla\cdot(V[\mu]\mu)=\mu u \\
\mu(0) = \mu_0.
\end{cases}
\end{equation}
We define the kinetic variance
%\begin{equation*}
$\X(t) = \|\int_{\R^d}(x-x^*)d\mu(t,x)\|^2.$ 
%\end{equation*} 
We seek a control function $u:\R^+\times \R^{dN}$ that minimizes instantaneously $\frac{d}{dt}\X(t)$.
Similarly to Section \ref{Sec:contconstmass}, we can further restrict the set of controls to functions satisfying
$$
\int_{\R^d} u(t,x)d\mu(t,x) = 0 \quad \text{ for a.e } t\in \R^+.
$$ 
We can also extend the $L^1$ and $L^\infty$ bounds on the control to the mean-field setting:
\begin{itemize}
\item  $L^\infty$ condition: $\|u\|_{L^\infty(\R^+\times \R^d)}\leq \alpha$
\item  $L^1$ condition: $\|u(t,\cdot)\|_{L^1(\R^d)}\leq A$
\end{itemize}
We can compute:
\begin{equation*}
\begin{split}
 \frac{d}{dt}\X(t)  = & 2\langle \int_{\R^d}(x-x^*)d\mu(t,x), \frac{d}{dt} \int_{\R^d}(x-x^*)d\mu\rangle 
  =  2\langle \int_{\R^d}(x-x^*)d\mu(t,x), - \int_{\R^d}(x-x^*)d(\nabla\cdot(V[\mu]\mu) ) \rangle \\
   & + 2\langle \int_{\R^d}(x-x^*)d\mu(t,x), \int_{\R^d}(x-x^*) u(t,x) d\mu \rangle.
\end{split}
\end{equation*}

\section{Numerical simulations}\label{Sec:simu}

We now provide simulations of the evolution of System \eqref{eq:syst-controlsimp} with the various control strategies presented in Sections \ref{Sec:contconstmass} ($u\in\Uinf\cap\UM$ and $u\in\Uone\cap\UM$) and \ref{Sec:contvarmass} ($u\in\Uinf$ and $u\in\Uone$). 

Four simulations were run with the same set of initial conditions $x^0\in\R^{dN}$ for $d=2$, $N=10$, and control bounds $\alpha=2$ and $A=10$. 
In each simulation, the control maximizes the instantaneous decrease of the functional $X$, with one of the various constraints exposed in Sections \ref{Sec:contconstmass} and \ref{Sec:contvarmass}.  
Figure \ref{fig:sim2D-1} shows that in all cases, the control successfully steers the weighted barycenter $\bx$ to the target position $x^*$. The evolution of the functional $t\mapsto \|\bx(t)-x^*\|$ (Figure \ref{fig:sim2D-2} (right)) shows that the target is reached faster with controls that allow for mass variation than for controls constrained to the set $\UM$. 
Figure \ref{fig:sim2D-weights} shows the evolution of each agent's individual weight for each of the four cases of Figure \ref{fig:sim2D-1}. Interestingly, when mass variation is allowed, we observe a general decrease in the total mass of the system in the case $u\in \Uinf$ (dotted grey line, Fig. \ref{fig:sim2D-weights} - left) and a general increase in the case $u\in \Uone$ (dotted grey line, Fig. \ref{fig:sim2D-weights} - right). 
Figure \ref{fig:sim2D-control} shows the control values $u_i(t)$ for each $i\in\elts$ and each $t\in [0,1]$. 
Notice that in the case of mass-preserving control $u\in\UM$ (top row), the controls do not saturate the constraints $u\in\Uinf$ or $u\in\Uone$. In the case of varying total mass, as shown in Section \ref{Sec:contvarmass}, the control strategies minimizing $\frac{dX}{dt}$ saturate the constraints. 

%In the first example illustrated in Figure \ref{fig:sim2D-1}, with control (first and second row), the weighted average is successfully steered towards the target position. Without control (third row), the weighted average remains constant due to the lack of weight dynamics.
Figure \ref{fig:sim2D-2} (left) shows that the constraint $u\in\Uone$ promotes a \textit{sparse} control strategy. A control is said to be sparse if it is active only on a small number of agents. As mentioned in Section \ref{Sec:contconstmass}, mass-varying controls cannot be strictly sparse, and need to have at least two non-zero components at each time. Indeed, the control strategy $u\in\Uone\cap\UM$ has either two or three active components at all time.

\begin{figure}
\centering
\includegraphics[trim = 1.4cm 0.6cm 1.4cm 0.6cm, clip = true, scale=0.4]{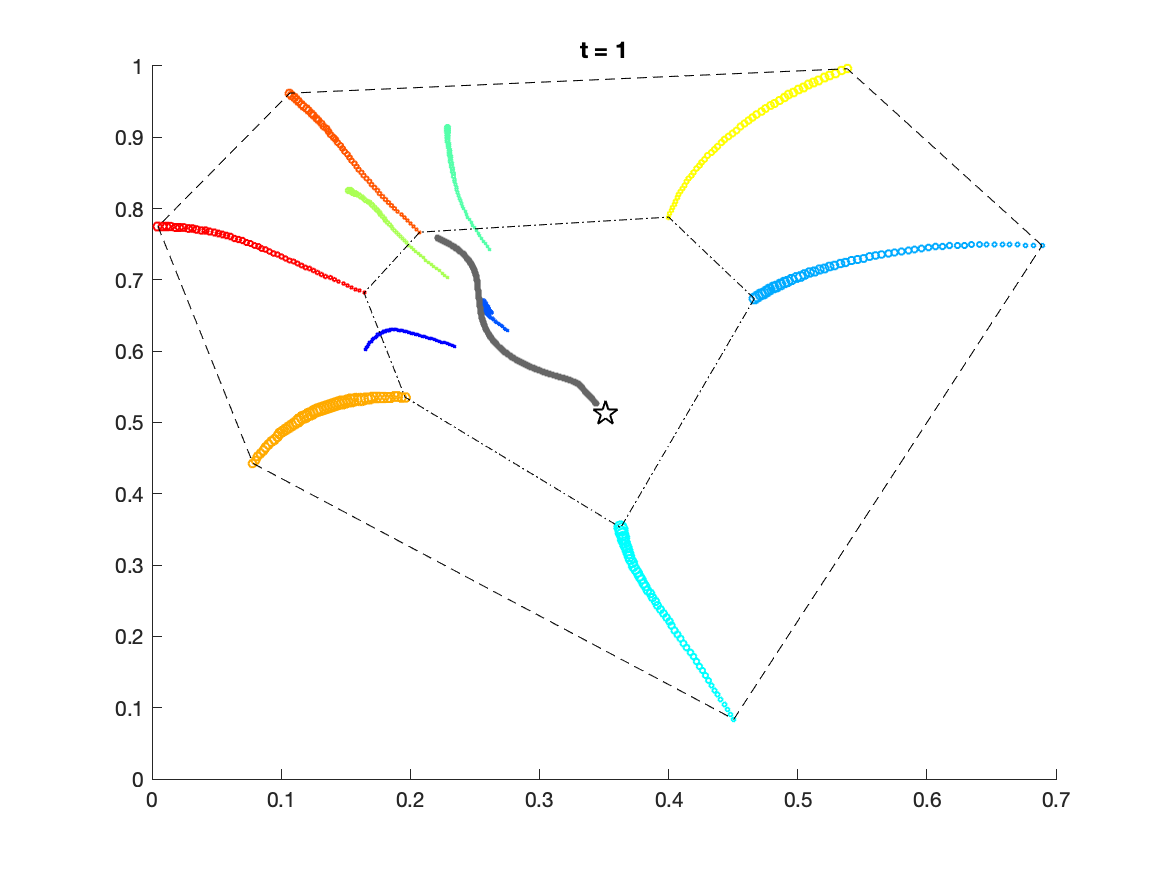} %\hfill
\includegraphics[trim = 1.4cm 0.6cm 1.4cm 0.6cm, clip = true, scale=0.4]{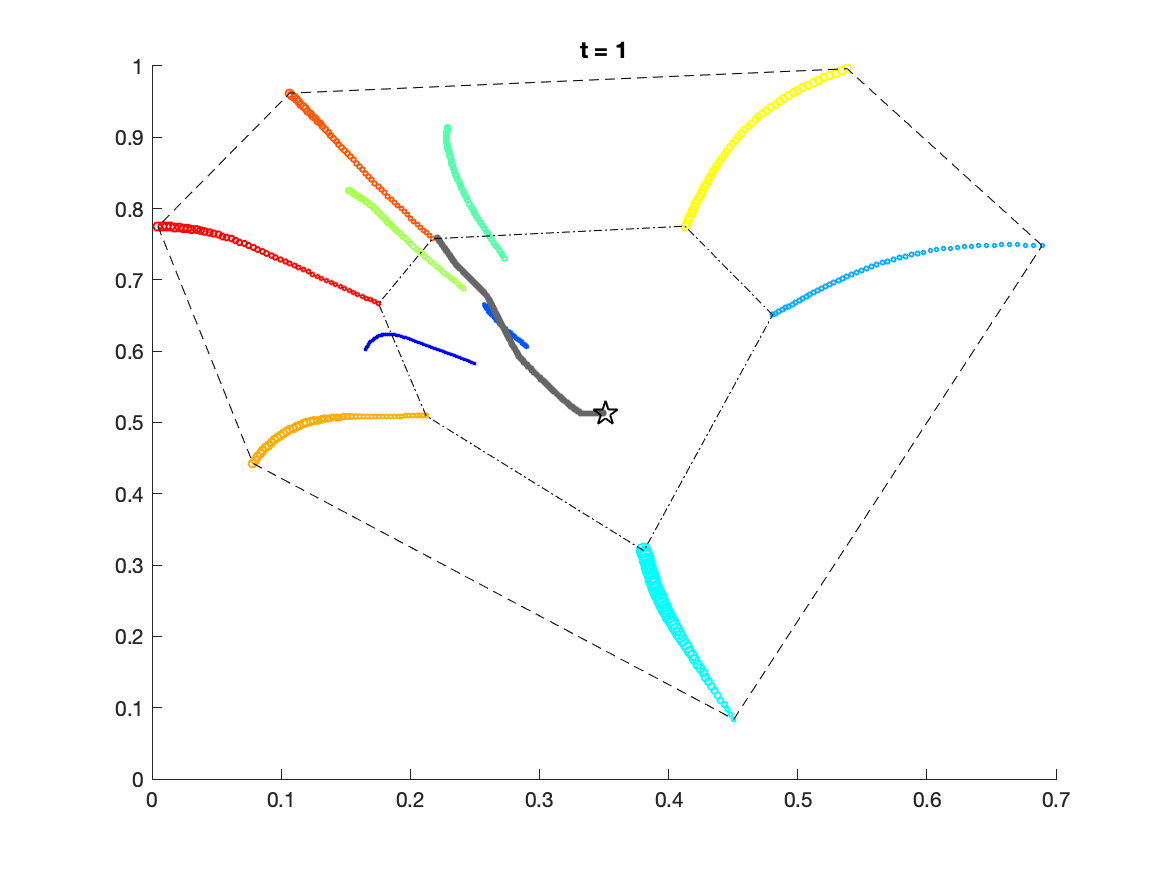} \\
\includegraphics[trim = 1.4cm 0.6cm 1.4cm 0.6cm, clip = true, scale=0.4]{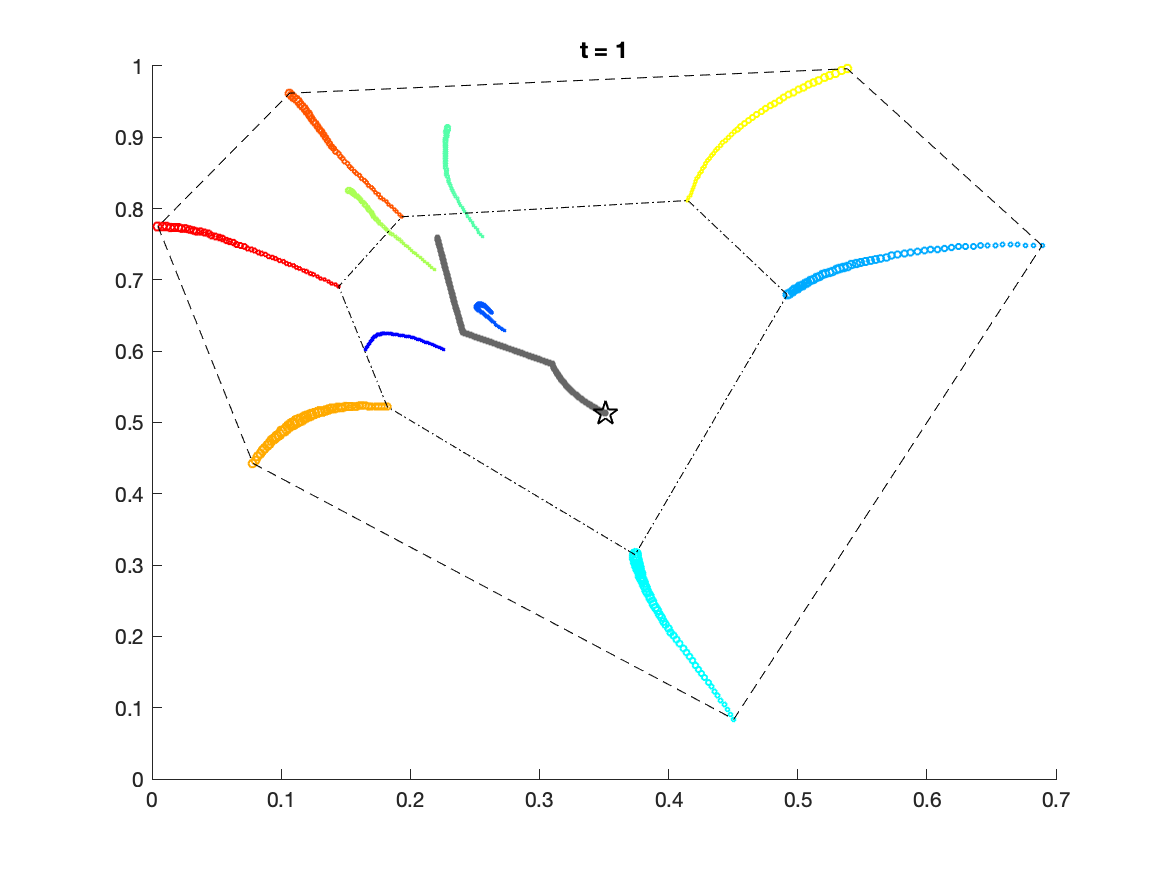} %\hfill
\includegraphics[trim = 1.4cm 0.6cm 1.4cm 0.6cm, clip = true, scale=0.4]{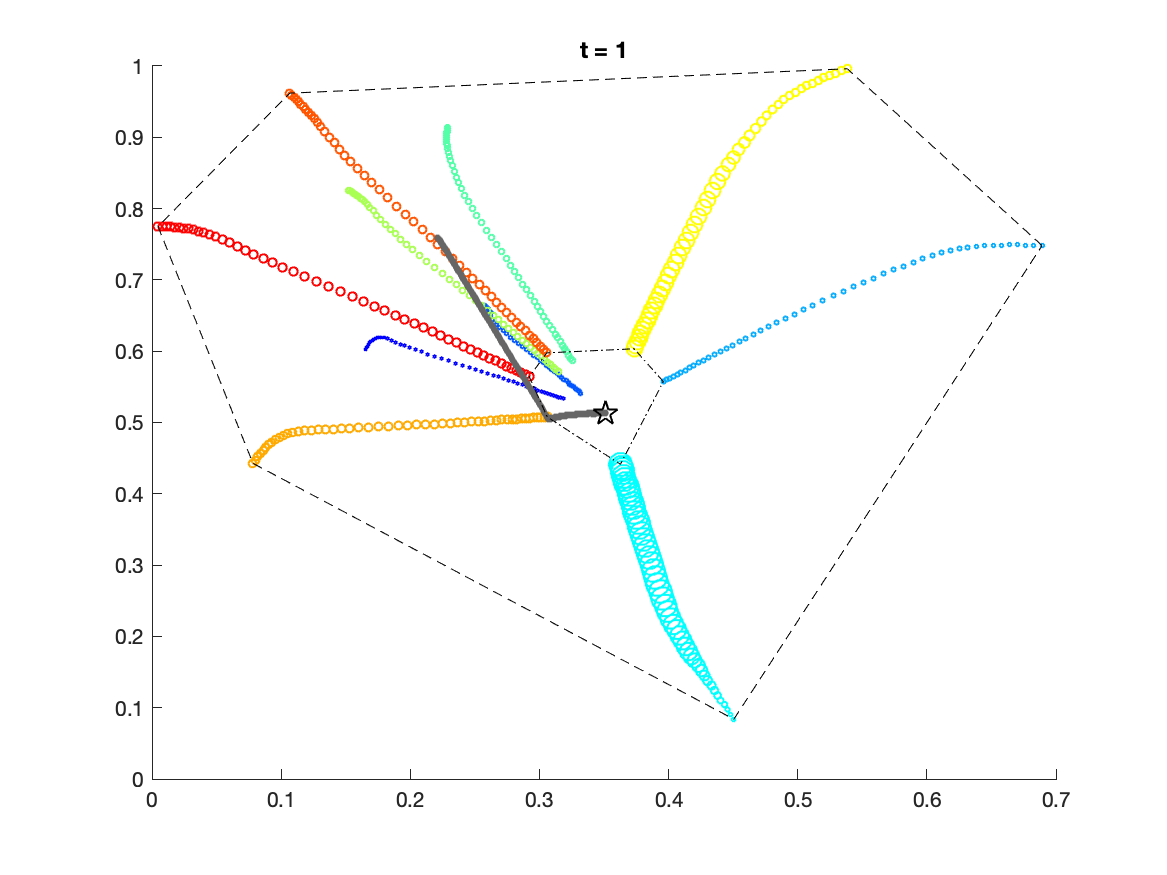} \\

\caption{Trajectories of the positions $x_i(t)$ in $\R^2$ corresponding to the controlled system \eqref{eq:syst-controlsimp} with $N=10$ and $a:s\mapsto e^{-s^2}$. 
The top row corresponds to controls satisfying $u\in\UM$ (Section \ref{Sec:contconstmass}) while the second row corresponds to controls allowing total mass variation (Section \ref{Sec:contvarmass}). 
In each row, the left column corresponds to $u\in \Uinf$ and the right one corresponds to $u\in \Uone$.
% (left) and time evolution of the weights (right) with control $u\in \Uinf$ (top row), $u\in \Uone$ (center row) and without control (bottom row). The control constraints were set to $\alpha=2$ and $A=10$.
In each plot, different agents are represented by different colors, and the size of each dot is proportional to the weight of the corresponding agent at that time. The gray dotted trajectory represents the weighted barycenter $\bx$. The black star represents the target position, inside the convex hull of the initial positions (dashed polygon). The convex hull of the positions at final time is represented by the dot-dashed polygon.
%Bottom row: the dotted black line represents the average weight, constant in time (showing that the constraint $u\in P_m$ is indeed satisfied). 
\label{fig:sim2D-1}
}
\end{figure}

\begin{figure}
\centering
\includegraphics[trim = 1.4cm 0.6cm 1.4cm 0.6cm, clip = true, scale=0.4]{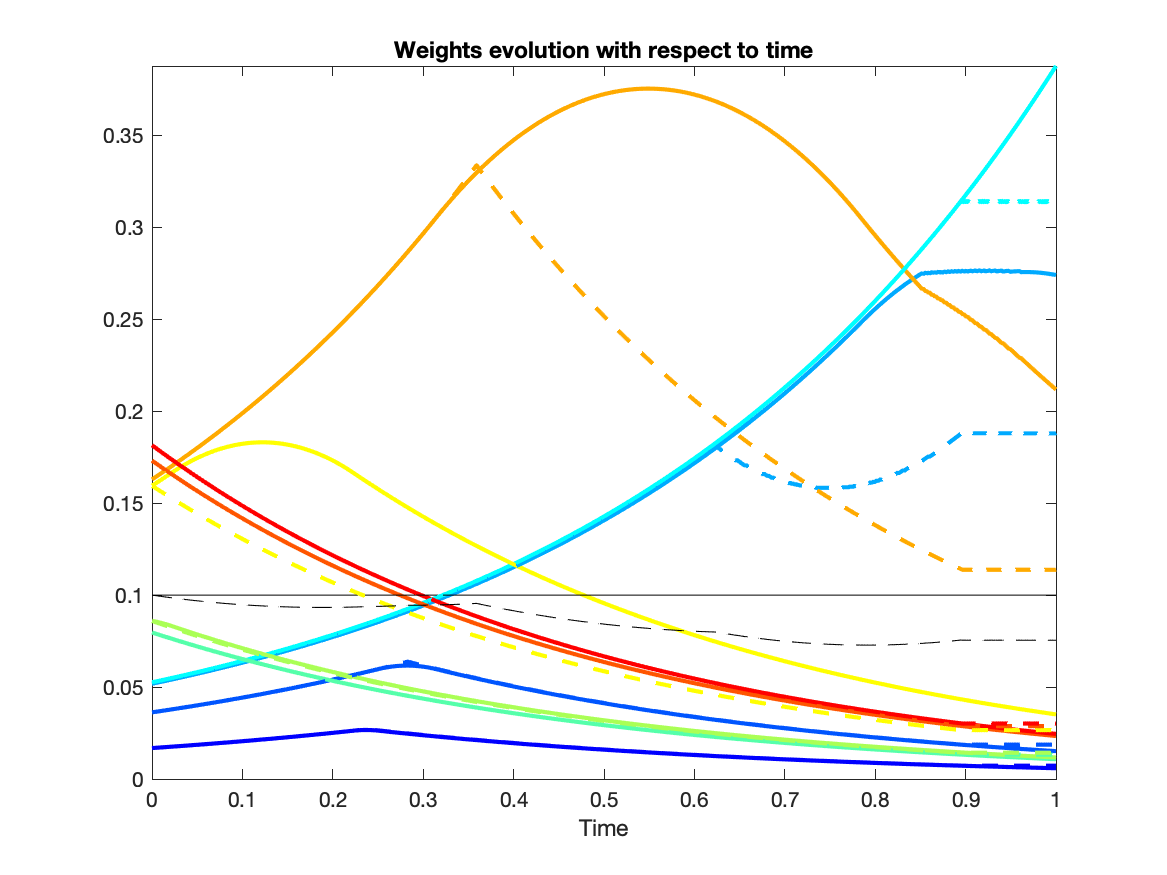} %\hfill
\includegraphics[trim = 1.4cm 0.6cm 1.4cm 0.6cm, clip = true, scale=0.4]{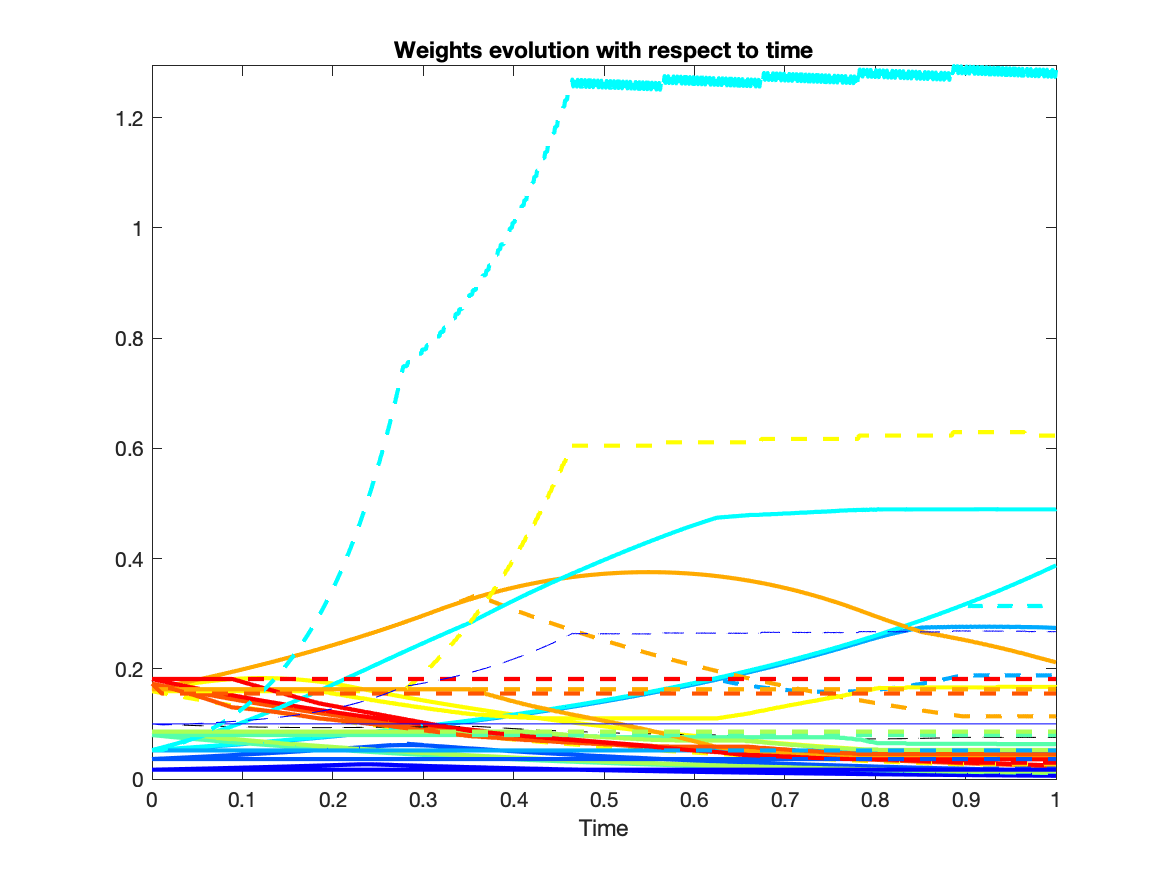}
\caption{Evolution of the weights $t\mapsto m_i(t)$ corresponding to control strategies $u\in\Uinf$ (left) and $u\in\Uone$ (right). In each plot, the continuous lines correspond to the mass-preserving control $u\in\UM$ of Section \ref{Sec:contconstmass}, and the dashed lines to the controls of Section \ref{Sec:contvarmass}. Each colored line (respectively dashed or continuous) shows the evolution of the corresponding colored agent of Fig. \ref{fig:sim2D-1}, and the grey lines represent the evolution of the average weight $\frac{1}{N}\sumN m_i$.
\label{fig:sim2D-weights}}
\end{figure}

\begin{figure}
\centering
\includegraphics[trim = 1.4cm 0.6cm 1.4cm 0.6cm, clip = true, scale=0.4]{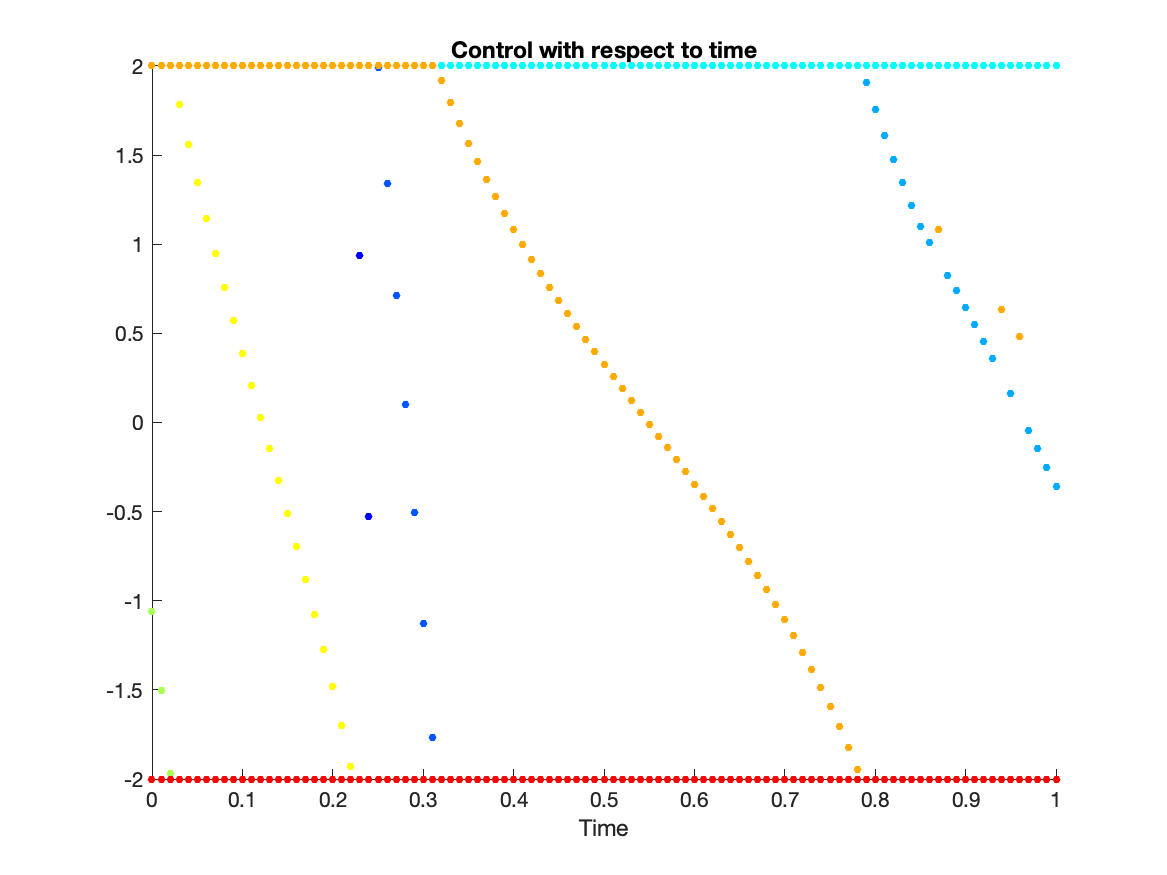} %\hfill
\includegraphics[trim = 1.4cm 0.6cm 1.4cm 0.6cm, clip = true, scale=0.4]{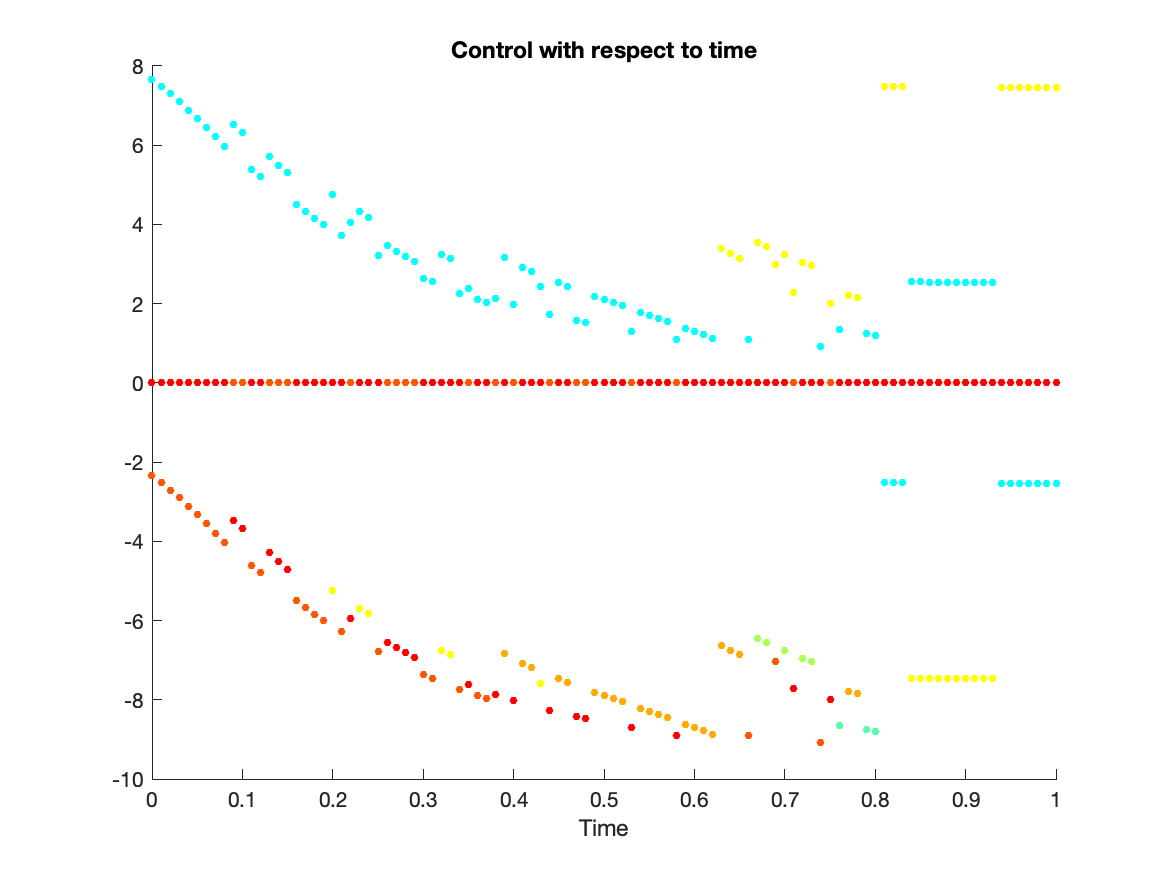} \\
\includegraphics[trim = 1.4cm 0.6cm 1.4cm 0.6cm, clip = true, scale=0.4]{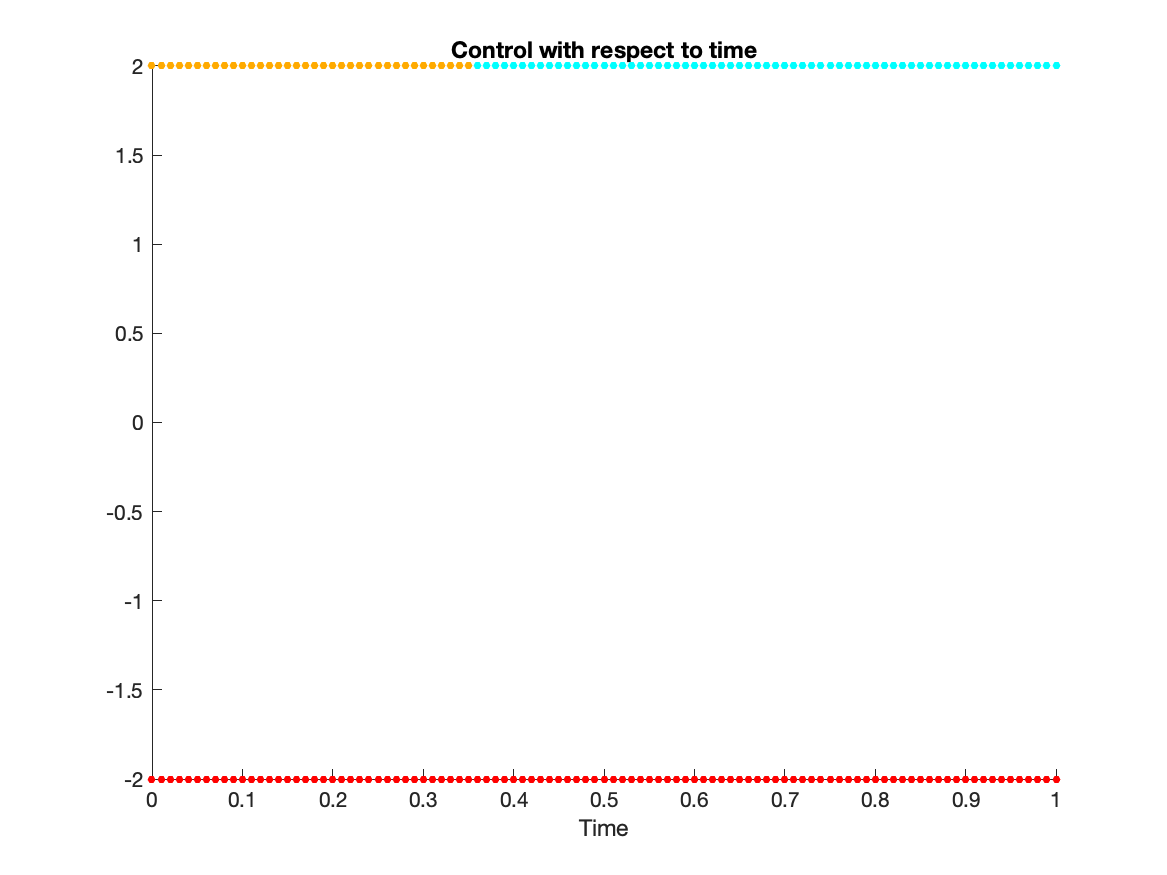} %\hfill
\includegraphics[trim = 1.4cm 0.6cm 1.4cm 0.6cm, clip = true, scale=0.4]{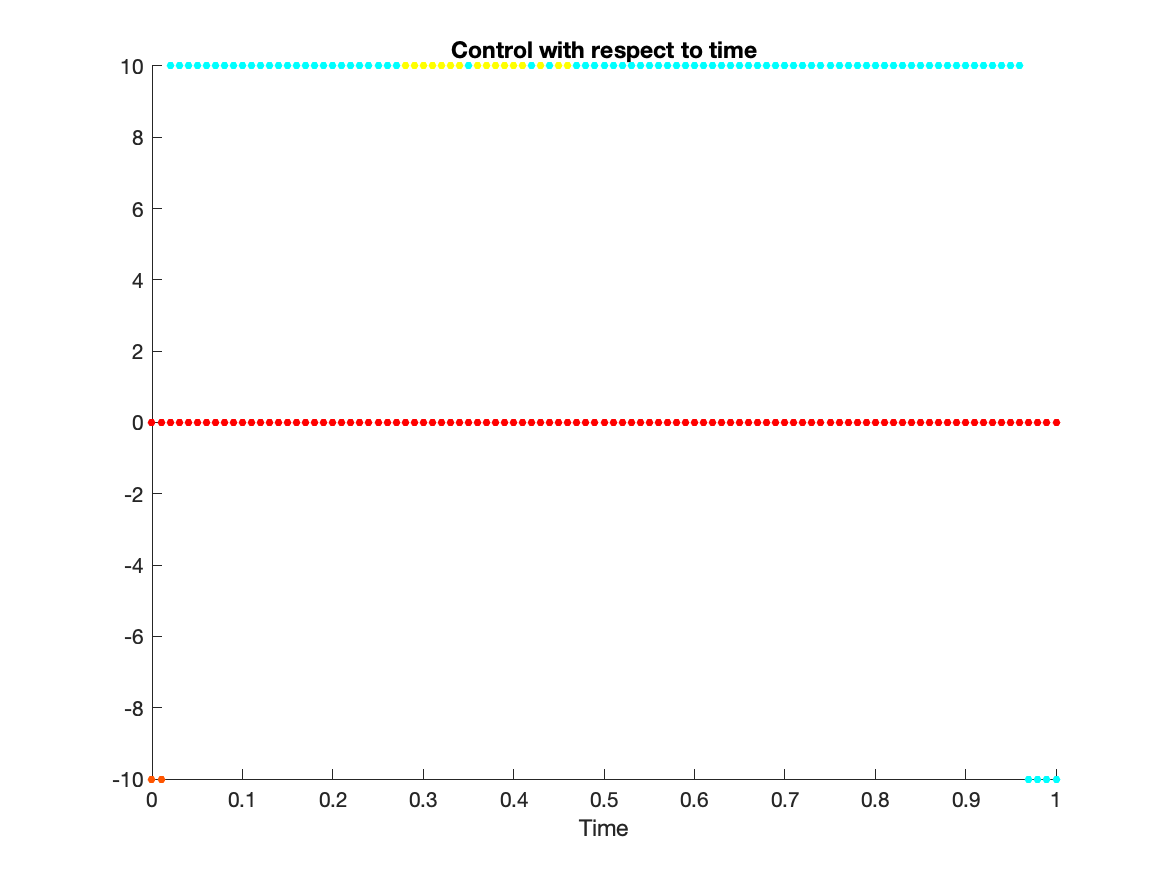} \\
\caption{Evolution of the control functions $t\mapsto u_i(t)$ corresponding to the systems of Fig. \ref{fig:sim2D-1}.  
The top row corresponds to controls satisfying $u\in\UM$ (Section \ref{Sec:contconstmass}) while the second row corresponds to controls allowing total mass variation (Section \ref{Sec:contvarmass}). 
In each row, the left column corresponds to $u\in \Uinf$ and the right one corresponds to $u\in \Uone$.
Each control function $u_i$ is colored according to the corresponding agent $x_i$ of Fig. \ref{fig:sim2D-1}.
\label{fig:sim2D-control}
}
\end{figure}

\begin{figure}
\centering
\includegraphics[trim = 1.4cm 0.6cm 1.4cm 0.6cm, clip = true, scale=0.4]{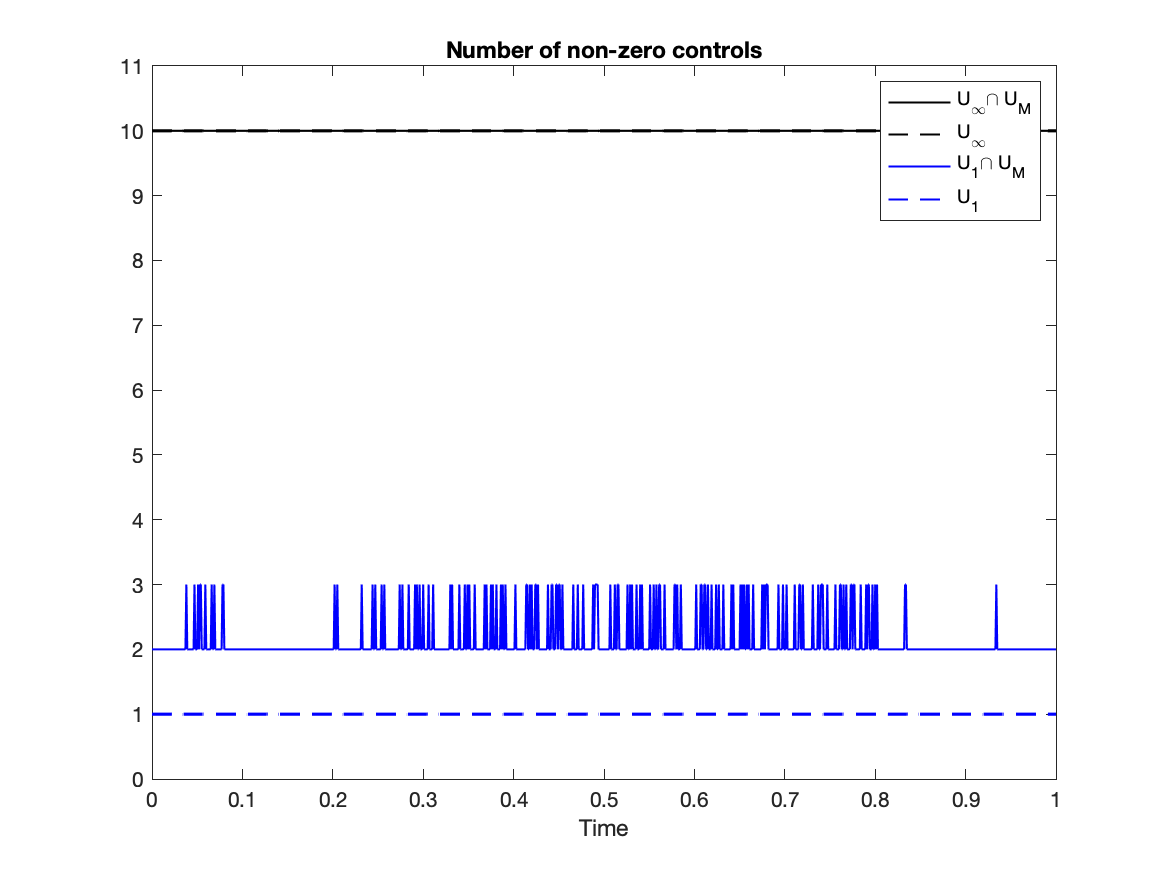} %\hfill
\includegraphics[trim = 1.4cm 0.6cm 1.4cm 0.6cm, clip = true, scale=0.4]{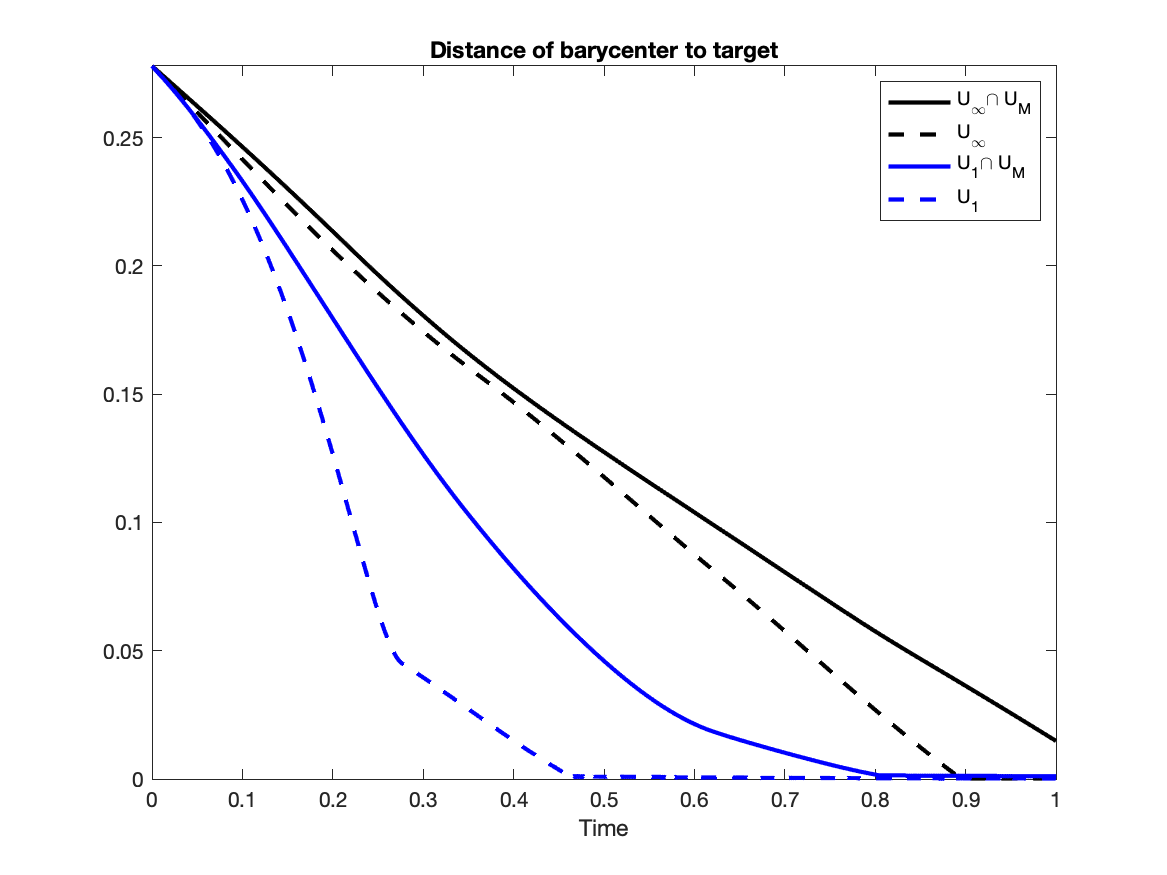}
\caption{Left: Evolution of the number of active components of the control with the various strategies corresponding to Fig. \ref{fig:sim2D-1}. Right: Distance of the barycenter to the target position $t\mapsto \|\bx(t)-x^*\|$ \label{fig:sim2D-2}.
}
\end{figure}

\section*{Conclusion}

In this paper we aimed to control to a fixed consensus target a multi-agent system with time-varying influence, by acting only on each agent's weight of influence. We proved approximate controllability of the system to any target position inside the convex hull of the initial positions. We then focused on designing control strategies with various constraints on the control bounds and on the total mass of the system. %We also presented the mean-field limit of the system with and without control. 

We also presented the mean-field limit of the discrete model for general mass dynamics that satisfy the indistinguishability property. The population density satisfies a transport equation with source, where both the source term and the velocity are non-local.
Well-posedness of this equation as well as continuity with respect to the initial data are assumed, and will be proven in a later work.

The combination of our analysis with numerical simulations allows us to compare the control performances of the four strategies. %designed in Sections \ref{Sec:contconstmass} and \ref{Sec:contvarmass}. 
Firstly, the control strategies allowing total mass variation are more efficient than the control strategies conserving the total mass, as the weighted barycenter reaches the target position faster.% (Fig. \ref{fig:sim2D-2}). 
Interestingly, this is not obvious a priori from Equations \eqref{eq:X} and \eqref{eq:X2}, as the time derivatives of the functional $X=\|\bx-x^*\|^2$ are of the same order of magnitude in the two cases. 
We also remark that the controls allowing mass variation can either increase or decrease the total mass of the system.% (Fig. \ref{fig:sim2D-weights}). Figure \ref{fig:sim2D-control} illustrates the fact that the control strategies designed in \eqref{eq:contvarmassinf} and \eqref{eq:contvarmassone} tend to saturate control components to the control bounds, contrarily to the mass-preserving control $u\in\UM$.

The constraint $u\in\Uone$ is usually enforced to promote sparsity (see \cite{FPR14,PPT18}), that is the activation at any given time of as few control components as possible. However, the added constraint $u\in\UM$ renders strict sparsity impossible, and we already remarked that in order to preserve the total mass, the control has to be active on at least two components at any given time. 
%Figure \ref{fig:sim2D-2} 
Simulations shows that indeed, the control $u\in\Uone\cap\UM$ oscillates between two and three active components, whereas the control $u\in\Uone$ maintains strict sparsity. On the other hand, the controls $u\in\Uinf$ and $u\in\Uinf\cap\UM$ act simultaneously on all components at all time.

Although in the illustrating simulations, all four controls manage to drive the system's weighted barycenter to the target position $x^*$, this would not have necessarily been achievable with either a target closer to the initial convex hull boundary or with stricter control bounds $\alpha$ and $A$. The question of determining the set of achievable targets given an initial distribution of positions and weights and control bounds remains open and is an intriguing future direction of this work, as is the control of the mean-field model obtained as limit of the finite-dimensional one when the number of agents tends to infinity.

\balance

\end{document}